\begin{document}
\bibliographystyle{plain}

\newtheorem{thm}{Theorem}[section]
\newtheorem{lem}[thm]{Lemma}
\newtheorem{prop}[thm]{Proposition}
\newtheorem{cor}[thm]{Corollary}
\newtheorem{conj}[thm]{Conjecture}
\newtheorem{defn}[thm]{Definition}

\newcommand{\tr}{\ensuremath{\operatorname{tr} } }
\newcommand{\AH}{\ensuremath{\operatorname{AH} } }
\newcommand{\AHbar}{\ensuremath{\operatorname{A\overline{H} } } }
\newcommand{\AI}{\ensuremath{\operatorname{A\mathcal{I} } } }
\newcommand{\AIbar}{\ensuremath{\operatorname{A\overline{\mathcal{I} } } } }

\renewcommand{\H}{\ensuremath{\operatorname{H} } }
\newcommand{\Hbar}{\ensuremath{\operatorname{\overline{H} } } }
\newcommand{\GH}{\ensuremath{\operatorname{GH} } }
\newcommand{\GHbar}{\ensuremath{\operatorname{G\overline{H} } } }
\newcommand{\QH}{\ensuremath{\operatorname{QH} } }
\newcommand{\QHbar}{\ensuremath{\operatorname{Q\overline{H} } } }
\newcommand{\GI}{\ensuremath{\operatorname{G\mathcal{I} } } }
\newcommand{\GIbar}{\ensuremath{\operatorname{G\overline{\mathcal{I} } } } }
\newcommand{\QI}{\ensuremath{\operatorname{Q\mathcal{I} } } }
\newcommand{\QIbar}{\ensuremath{\operatorname{Q\overline{\mathcal{I} } } } }
\newcommand{\I}{\ensuremath{\operatorname{\mathcal{I} } } }
\newcommand{\Ibar}{\ensuremath{\operatorname{\overline{\mathcal{I} } } } }

\newcommand{\PSLC}{\ensuremath{{\rm PSL}_2 ({\bf C})} }
\newcommand{\Hom}{\ensuremath{\operatorname{Hom} } }
\newcommand{\Mod}{\ensuremath{\operatorname{Mod} } }
\newcommand{\QF}{\ensuremath{\operatorname{QF} } }

\title{The curious moduli spaces of unmarked {Kleinian} surface groups}
\author{Richard D. Canary and Peter A. Storm}

\begin{abstract}
Fixing a closed hyperbolic surface $S$, we define a moduli space $\AI(S)$ of unmarked hyperbolic $3$-manifolds homotopy equivalent to $S$.  This $3$-dimensional analogue of the moduli space $\mathcal{M}(S)$ of unmarked hyperbolic surfaces homeomorphic to $S$ has bizarre local topology, possessing many points that are not closed.  There is, however, a natural embedding $\iota: \mathcal{M}(S) \to \AI(S)$ and compactification $\AIbar(S)$ such that $\iota$ extends to an embedding of the Deligne-Mumford compactification $\overline{\mathcal{M}}(S) \to \AIbar(S)$.
\end{abstract}

\thanks{Canary was partially supported by NSF grants DMS-0504791 and DMS-0554239. Storm was partially supported by NSF grant DMS-0741604
and the Roberta and Stanley Bogen Visiting Professorship at Hebrew University.}
\date{\today}

\maketitle

\section*{}\label{sec:introduction}
\setcounter{section}{1} 

For a closed oriented hyperbolic surface $S$, the moduli space $\mathcal{M}(S)$ of hyperbolic (or Riemann) surfaces homeomorphic to $S$ is a familiar and interesting object.  Hyperbolic $3$-manifolds homotopy equivalent to $S$ have also been intensely studied since the foundational work of Ahlfors and Bers.  An analogous moduli space of (unmarked) hyperbolic $3$-manifolds homotopy equivalent to $S$ is rarely considered.  Here we define such a topological moduli space $\AI(S)$, study a bit of its local topology, and define a natural  compactification $\AIbar(S)$. 

Perhaps a reason for its anonymity, the local topology of $\AI(S)$ is bizarre.  We can reinterpret interesting phenomena from the theory of Kleinian groups as statements about $\AI(S)$.  There exist singly-degenerate Kleinian groups whose ending lamination is fixed by a pseudo-Anosov homeomorphism.  This beautiful fact implies $\AI(S)$ is not well separated, possessing many points that are not closed.  Compactness of Bers slices implies that even the geometrically finite points fail to form a Hausdorff space. 

As penance for its local foibles, $\AI(S)$ offers better global topological behavior.  Recall the Deligne-Mumford compactification $\overline{\mathcal{M}}(S)$ is obtained by adding Riemann surfaces with nodes.  The space $\AI(S)$ has an analogous compactification $\AIbar(S)$ obtained by adding hyperbolic $3$-manifolds $N$ homotopy equivalent to $S$ cut along simple closed curves, such that the corresponding ``cut'' homotopy classes in $N$ are parabolic. 
We begin by constructing an augmented deformation space $\AHbar(S)$, which is the analogue of the augmented
Teichm\"uller space, and then show that its quotient $\AIbar(S)$
is sequentially compact.
Sequential compactness follows by combining theorems of Thurston \cite[Thm.6.2]{Th4} and Canary-Minsky-Taylor \cite{CMT} to show that, up to re-marking, a sequence in $\AH (S)$ always converges on the complement of a multicurve in $S$.

There is an embedding $\mathcal{M}(S) \to \AI(S)$ with image the closed set of Fuchsian manifolds.  This embedding extends to an embedding $\overline{\mathcal{M}}(S) \to \AIbar(S)$ of the Deligne-Mumford compactification.
Hopefully connoisseurs of the more familiar $\mathcal{M}(S)$ will find it interesting to see how the story changes abruptly in $3$-dimensions, with local structure suffering while global compactness manages to survive.  

\medskip\noindent
{\bf Acknowledgements:} The authors would like to thank the referee for a very
careful reading of the original manuscript and many helpful suggestions to improve the
exposition.

\section{Preliminaries}\label{sec:prelim}

Let $S$ be a compact oriented hyperbolizable surface with (possibly empty) boundary.  We do not assume $S$ is connected.  Denote the set of components of $S$ by $c(S)$.  For convenience, put a hyperbolic metric on $S$ so that any boundary curves are very short horocycles.  In this case $S$ embeds isometrically into a complete finite area hyperbolic surface $\hat{S}$ homeomorphic to the interior of $S$.  By choosing the boundary curves to be sufficiently short, we can assume the simple closed geodesics of $\hat{S}$ are contained in $S$.  A multicurve $b$ is a (necessarily finite) set of pairwise disjoint simple closed geodesics on $S$.  (As geodesics, no component of $b$ is homotopic into $\partial S$.)  Let $\mathcal{N} b \subset S$ denote a small open collar neighborhood of $b$.

All hyperbolic $3$-manifolds in this article are assumed to be oriented and complete.  Unless otherwise stated, they have no boundary.  However, our manifolds will not always be connected.  Let $\mu_3>0$ be the Margulis constant for hyperbolic $3$-manifolds.  Recall that $\mu_3$ is a number with the following property.  If $N$ is any hyperbolic $3$-manifold then the subset of points in $N$ with injectivity radius at most $\mu_3$ is a disjoint union of two types of sets: an embedded solid torus neighborhood of a simple closed geodesic with length less than $\mu_3$, or a properly embedded product region $T \times [0,\infty)$ where $T$ is either a torus or a noncompact annulus without boundary \cite{BP}.  Define the cusps of $N$ to be the components of the form $T \times [0,\infty)$.

We now give two definitions of the set $\H(S,\partial S)$ of marked Kleinian surface groups associated to $S$; one algebraic and one geometric.  These two perspectives are both useful.  Many of the following definitions will have an algebraic and a geometric formulation.  We begin with the algebraic definition.
Let $P \subseteq \partial S$ be a subset of boundary components.  (We are interested only in the two cases $P=\partial S$ and $P=\emptyset$.)  When $S$ is connected, the set $\H(S,P)$ of marked Kleinian surface groups associated to $S$ is the set of conjugacy classes of discrete faithful representations 
$\rho: \pi_1(S)\to \PSLC$ such that if
$\gamma \in \pi_1(S)$ can be represented by a curve freely homotopic into $P$, then $\rho(\gamma)$ is parabolic.
If $S$ is disconnected, $\H(S, P)$ is the Cartesian product of
$\H(R, P \cap R)$ over the components $R$ of $S$.  The notation $\H(S)$ is shorthand for $\H(S,\emptyset)$.

Abusing notation slightly, we will often simply use $\rho$ to denote an element of $\H(S,P)$.  If necessary, we will distinguish the representations of the components of $S$ using the notation $\{ \rho_R \}_{R \in c(S)}$, where $c(S)$ is the set of components of $S$.  An element $\rho$ of $\H(S,P)$ determines a hyperbolic $3$-manifold 
\[ N_\rho = \coprod_{R \in c(S)} {\bf H}^3 / \rho_R (\pi_1 (R)),\]
together with a homotopy equivalence $m_\rho: S \to N_\rho$, known as the marking of $N_\rho$, induced by the isomorphism $\pi_1 (S) \to N_\rho$ (with the obvious modification for disconnected $S$).  The marking sends $P$ into the cusps of $N_\rho$.  The phrase, ``a hyperbolic manifold in $\H(S,\partial S)$," indicates such a pair $(N_\rho, m_\rho)$.  

With this pair of objects in mind, we may define $\H(S, P)$ more geometrically as follows.  For an oriented hyperbolic $3$-manifold $N$ and a homotopy equivalence $m: S \to N$, $(N,m) \in \H(S,P)$ if $m$ takes $P$ into the cusps of $N$.  Two pairs $(N_1, m_1)$ and $(N_2, m_2)$ are equal in $\H(S ,P)$ if there exists an orientation preserving isometry $\iota: N_1 \to N_2$ such that $\iota \circ m_1 $ is homotopic to $m_2$.  

By Bonahon's tameness theorem, any hyperbolic manifold in $\H(S,\partial S)$ is homeomorphic to $S \times {\bf R}$ \cite{BoTame}.  We could ``stiffen'' the above  geometric definitions using homeomorphisms rather than homotopy equivalences.  Nothing is gained from this, and we will keep the above more traditional definitions.

An element $\{\rho_R \}_{R \in c(S)}$ of $\H(S, \partial S)$ is Fuchsian if each $\rho_R$ is conjugate to a representation with image in the group ${\rm PSL}_2 ({\bf R})$ of isometries of the hyperbolic plane.  Equivalently, a hyperbolic manifold in $\H(S ,\partial S)$ is Fuchsian if it contains an embedded totally geodesic hyperbolic surface homotopy equivalent to $N$.

Define a topology on $\H(S,P)$ as follows.  Assume first that $S$ is connected.  Consider the subset $\Hom_P (\pi_1 (S), \PSLC) \subseteq \Hom (\pi_1 (S), \PSLC)$ of homomorphisms taking conjugacy classes of $P$ to parabolic elements of $\PSLC$.  Put the compact-open topology on $\Hom_P (\pi_1 (S), \PSLC)$.  Let $\mathcal{D}$ denote the subset of faithful homomorphisms with discrete image.  (The set $\mathcal{D}$ is closed \cite{chuckrow,jorgensen}.)  The isometry group $\PSLC$ acts by conjugation on $\Hom_P (\pi_1 (S), \PSLC)$.  There is a subset $\mathcal{O}$ of $\Hom_P (\pi_1 (S), \PSLC)$ containing $\mathcal{D}$ such that the quotient of $\mathcal{O}$ by this action is a smooth complex manifold.  (The set $\mathcal{O}$ is the set of nonradical representations.  See \cite[Sec. 4.3]{K}.)  The subset $\mathcal{D}$ is preserved by the $\PSLC$-action, and it makes sense to define the topological quotient
\[ \AH (S,P) := \frac{\mathcal{D} \cap \Hom_P (\pi_1 (S), \PSLC)}{\PSLC}. \]
$\AH(S,P)$ is a subspace of the manifold $\mathcal{O}/\PSLC$, which in turn can be algebraically ``completed'' to the character variety $X(S,P)$, an algebraic variety which is intuitively the quotient of $\Hom_P (\pi_1(S), \PSLC)$ by the $\PSLC$-action.
For more information on this construction see \cite{HP,K}.
If $S$ is not connected then topologize $\H(S,P)$ as the topological product of the $\H(R, P\cap R)$ for $R \in c(S)$.  This topology on $\AH (S,P)$ is called the algebraic topology.

More geometrically, we say that a sequence $\{(N_n,m_n)\}$ in $\text{H}(S,\partial S)$
converges algebraically to $(N,m)$ if there exist homotopy equivalences $h_n:N\to N_n$
such that $m_n$ is homotopic to $h_n\circ m$ for all $n$ and $\{h_n\}$ $C^\infty$-converges  to
a local isometry on every compact subset of $N$.  (The equivalence of these definitions
is discussed in \cite[Sec. 3.1]{Mc3}.)

The interior $\QF(S)$ of $\AH (S,\partial S)$ (as a subset of $X(S,\partial S)$) consists
of the quasifuchsian hyperbolic 3-manifolds \cite{Marden,SullivanII}.
We recall that if $N={\bf H}^3/\Gamma$
then its conformal boundary $\partial_c N$ is the quotient of the domain of
discontinuity for the action of $\Gamma$ on the Riemann sphere. 
A hyperbolic 3-manifold $N$ in $\AH(S,\partial S)$ is quasifuchsian if
the conformal bordification $N\cup\partial_c N$ of $N$ is homeomorphic to $S\times [0,1]$.
Bers \cite{Ber7} showed that a quasifuchsian hyperbolic 3-manifold is determined by the
conformal structure on $S\times\{0,1\}$ and that any conformal structure arises.
If $(X,Y)\in \mathcal{T}(S)\times \mathcal{T}(\overline{S})$, then we
let $Q(X,Y)\in \AH(S,\partial S)$ be the quasifuchsian hyperbolic 3-manifold
with conformal structure $X$ on ``top'' and conformal structure $Y$ on the
``bottom.'' (Here $\overline{S}$ denotes $S$ with the opposite orientation.)

\section{The topology of the moduli space}

We will assume throughout this section that $S$ is not a thrice-punctured sphere,
since in that case $\H(S,\partial S)$ is a point.
The moduli set $\I(S,\partial S)$ of unmarked Kleinian surface groups
is simply the quotient of $\H(S,\partial S)$ by the natural action of the mapping
class group $\Mod(S)$ of isotopy classes of orientation-preserving
homeomorphisms of $S$. We recall that if $\phi$ is a (representative of a)
mapping class in
$\Mod(S)$, then $\phi$ acts on $\AH(S,\partial S)$ by taking
$\rho$ to $\rho\circ\phi_*^{-1}$.
An element of $\I(S,\partial S)$ is simply an oriented hyperbolic 3-manifold $N$ which is
homotopy equivalent to $S$ (by a homotopy equivalence which takes $\partial S$ into
the cusps of $N$), where we do not keep track of the specific homotopy equivalence.

The moduli space $\I(S,\partial S)$ inherits an algebraic topology
which we denote by $\AI(S,\partial S)$. 
In the algebraic topology, this moduli space is rather badly
behaved topologically. A first hint that this should be the case is the observation, first
due to Thurston \cite{Th4} (see also McMullen \cite{Mc3}), that there are points in
$\H(S,\partial S)$ which are fixed by infinite order elements of $\Mod(S)$. These points arise naturally
as  the covers associated to the fibers of finite volume hyperbolic 3-manifolds which fiber over the circle.
We recall the outline of Thurston's construction.
If $\phi:S\to S$ is a pseudo-Anosov element of $\Mod(S)$ and
$(X,Y)\in \mathcal{T}(S)\times \mathcal{T}(\overline{S})$, then Thurston considers
the sequence  of quasifuchsian Kleinian groups $\rho_n = Q(\phi^{n}(X),\phi^{-n}(Y))$ in $\AH (S, \partial S)$.  He shows
that $\{\rho_n\}$ converges to a Kleinian group $\rho$ with empty
domain of discontinuity which is a fixed point of the action of $\phi$ on
$\AH(S,\partial S)$.

A minor variation on Thurston's construction allows us to construct points in
$\AI(S,\partial S)$ that are not closed.

\begin{prop} \label{T_1-separated}
There are points of $\AI (S,\partial S)$ that are not closed.
\end{prop}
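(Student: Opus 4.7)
The plan is to exhibit a singly degenerate $\tau\in\H(S,\partial S)$ whose $\Mod(S)$-orbit accumulates, in $\AH(S,\partial S)$, on a doubly degenerate Kleinian group that lies outside the orbit. Since $\AI(S,\partial S)$ carries the quotient topology, the failure of the orbit to be closed in $\AH$ forces $\{[\tau]\}$ to fail to be closed in $\AI(S,\partial S)$.

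Fix a pseudo-Anosov $\phi\in\Mod(S)$ with attracting and repelling laminations $\lambda^+,\lambda^-$, and choose $(X,Y)\in\mathcal{T}(S)\times\mathcal{T}(\overline S)$. Thurston's sequence $\rho_n:=Q(\phi^nX,\phi^{-n}Y)$ converges algebraically to a $\phi$-invariant doubly degenerate $\rho_\infty$ with empty domain of discontinuity. The ``minor variation'' is to change the marking by $\phi^{-n}$: by the $\Mod(S)$-equivariance of $Q$, $\sigma_n:=\phi^{-n}\cdot\rho_n=Q(X,\phi^{-2n}Y)$, which lies in the Bers slice $B_X$. Since $B_X$ is precompact in $\AH$ \cite{Ber7}, a subsequence of $\{\sigma_n\}$ converges to some $\tau\in\partial B_X$, and because $\phi^{-2n}Y\to\lambda^-$ in Thurston's bordification of $\mathcal T(\overline S)$, the boundary point $\tau$ is singly degenerate with conformal structure $X$ on top and ending lamination $\lambda^-$ on the bottom.

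The decisive observation is that $\phi$ fixes the unmeasured lamination $\lambda^-$, so every $\phi^n\cdot\tau$ is singly degenerate with the same bottom ending lamination $\lambda^-$ and varying top conformal structure $\phi^nX$. We claim $\phi^n\cdot\tau\to\rho_\infty$ in $\AH$. For each fixed $n$, continuity of the action of $\phi^n$ together with Bers compactness identifies $\phi^n\cdot\tau$ with $\lim_{k\to\infty}Q(\phi^nX,\phi^{n-2k}Y)$. Select a diagonal $k=k(n)>n$ tending to $\infty$ fast enough that $\phi^n\cdot\tau$ lies within $1/n$ of $Q(\phi^nX,\phi^{n-2k(n)}Y)$ in the algebraic topology. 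Since $\phi^nX\to\lambda^+$ and $\phi^{n-2k(n)}Y\to\lambda^-$, Thurston's double limit theorem \cite{Th4} gives $Q(\phi^nX,\phi^{n-2k(n)}Y)\to\rho_\infty$, and the triangle inequality then yields $\phi^n\cdot\tau\to\rho_\infty$.

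Therefore $\rho_\infty\in\overline{\Mod(S)\cdot\tau}$; but every point of $\Mod(S)\cdot\tau$ is singly degenerate with nonempty conformal boundary, whereas $\rho_\infty$ has empty domain of discontinuity, so $\rho_\infty\notin\Mod(S)\cdot\tau$. Hence the orbit is not closed in $\AH(S,\partial S)$, and the quotient topology then makes $\{[\tau]\}$ non-closed in $\AI(S,\partial S)$. The main obstacle is the diagonal convergence $\phi^n\cdot\tau\to\rho_\infty$: the homeomorphisms $\phi^n$ of $\AH(S,\partial S)$ are continuous but not uniformly so in $n$, so closeness of $\sigma_n$ to $\tau$ does not automatically imply closeness of $\phi^n\sigma_n$ to $\phi^n\tau$, and bridging this gap is precisely what requires pairing Bers compactness with Thurston's double limit theorem.
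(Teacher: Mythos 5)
Your proposal is correct and follows essentially the same route as the paper: take the algebraic limit $\tau$ of the Bers slice sequence $Q(X,\phi^{-n}Y)$ (which has nonempty domain of discontinuity), then show that its $\Mod(S)$-orbit accumulates on a doubly degenerate group, so the fiber over the image of $\tau$ in $\AI(S,\partial S)$ is not closed. The only real difference is that the paper simply cites McMullen for the convergence of $\phi^n\cdot\tau$ to a group with empty domain of discontinuity, whereas you supply the diagonal argument (Bers compactness combined with the Double Limit Theorem) that underlies that citation.
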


\begin{proof}
It suffices to consider the case when $S$ is connected.
Pick a pseudo-Anosov homeomorphism $\phi : S \to S$ and
$(X,Y)\in{\mathcal{T}}(S)\times \mathcal{T}(\overline{S})$.
If we let $\rho_n = Q(X,\phi^{-n}(Y))$, then Bers \cite[Thm.3]{Ber1} proved that $\{\rho_n\}$
has a convergent subsequence with limit $\rho \in \H(S,\partial S)$ which has non-empty
domain of discontinuity.
Consider the sequence
$\rho \circ \phi_*^n \subset \H(S,\partial S)$.  Using Thurston's Double 
Limit Theorem \cite{Th4}, one can prove that up to subsequence this converges
to a manifold with an empty domain of discontinuity \cite[3.11]{Mc3}.  
Therefore the fiber in $\AH(S,\partial S)$ over the image of
$\rho$ in $\AI(S,\partial S)$ is not closed, implying that the image of $\rho$ is not a closed point.
\end{proof}

\medskip\noindent
{\bf Remark:} One may further show that if $N$ is a  degenerate
hyperbolic 3-manifold  in $\AI(S,\partial S)$ with a lower bound on its injectivity radius,
then $N$ is not a closed point in $\AI(S,\partial S)$. (More generally, one need only
assume that there is a lower bound on the injectivity radius of $N$ outside of
cusps associated to $\partial S$.)  We recall that $N$ is degenerate if its
domain of discontinuity has exactly one component and every cusp in $N$
is associated to a component of $\partial S$.
In this case, $N$ has one geometrically infinite end and there exists a sequence 
$\{ h_n:S\to N\}$ of pleated surfaces, each of which is a homotopy equivalence, exiting the geometrically infinite end of $N$.
Since $h_n(S)$ has bounded geometry (away from the cusps),
one may re-mark the maps $h_n$ so that the associated sequence
of representations $\rho_n=(h_n)_*$ in $AH(S,\partial S)$ has a convergent
subsequence with limit $\rho$ such that the domain of discontinuity of
$\rho(\pi_1(S))$ is empty. Again, it follows that $N$ is not a closed point
of $\AI(S,\partial S)$.

\bigskip

One can show that at least the geometrically finite points in $\AI(S,\partial S)$ are  closed.

\begin{prop}\label{prop:T1}
If $N \in \I(S,\partial S)$ is geometrically finite then $N$ is a closed point in 
$\AI(S,\partial S)$.
\end{prop}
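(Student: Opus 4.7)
The plan is to show that for a geometrically finite $\rho\in\H(S,\partial S)$, the $\Mod(S)$-orbit of $\rho$ is closed in $\AH(S,\partial S)$. Suppose $\phi_n\in\Mod(S)$ and $\rho_n:=\rho\circ\phi_n^{-1}\to\rho_\infty$ algebraically in $\AH(S,\partial S)$; the task is to produce $\phi\in\Mod(S)$ with $\rho_\infty=\rho\circ\phi^{-1}$. The main quantitative input is continuity of translation length: for every conjugacy class $[\gamma]$ in $\pi_1(S)$,
\[
\ell_\rho(\phi_n^{-1}\gamma)=\ell_{\rho_n}(\gamma)\longrightarrow\ell_{\rho_\infty}(\gamma)<\infty.
\]

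Geometric finiteness of $\rho$ makes its length spectrum essentially discrete. Every closed geodesic of $N_\rho$ lies in its compact convex core, so for each $L$ only finitely many loxodromic $\rho$-conjugacy classes have length $\le L$; by discreteness the loxodromic translation lengths admit a positive lower bound $\mu(\rho)$. When $\rho_\infty(\gamma)$ is loxodromic, this forces $[\phi_n^{-1}\gamma]$ into a finite set for $n$ large. When $\rho_\infty(\gamma)$ is parabolic, eventually $\ell_\rho(\phi_n^{-1}\gamma)<\mu(\rho)$, so $\phi_n^{-1}\gamma$ is parabolic in $\rho$; writing $\phi_n^{-1}\gamma=\beta_n^{k_n}$ with $\beta_n$ one of the finitely many primitive parabolic classes of $\rho$ (surface-group cusps are rank one), the identity $\phi_n(\beta_n)^{k_n}=\gamma$ together with unique roots in the torsion-free hyperbolic group $\pi_1(S)$ forces $k_n$ to divide the primitivity degree of $\gamma$. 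In every case $[\phi_n^{-1}\gamma]$ takes only finitely many values.

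Enumerate the conjugacy classes of $\pi_1(S)$ as $\gamma_1,\gamma_2,\ldots$ and extract a diagonal subsequence along which $[\phi_n^{-1}\gamma_j]$ stabilizes for every $j$. The resulting limiting action on conjugacy classes is induced by a well-defined automorphism $\psi$ of $\pi_1(S)$: injective by faithfulness of the algebraic limit, surjective by the Hopfian property of surface groups, and orientation-preserving as a limit of orientation-preserving mapping classes. The Dehn-Nielsen-Baer theorem identifies the outer class of $\psi$ with a unique $\phi\in\Mod(S)$, and because any outer automorphism fixing every conjugacy class is inner, one concludes that $\phi_n=\phi$ for all $n$ large, so that $\rho_\infty=\rho\circ\phi^{-1}$ lies in the orbit. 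The main technical obstacle is the parabolic case of the second paragraph: algebraic convergence alone does not prevent the cusp-exponent $k_n$ from drifting to infinity, and it is exactly the uniquely-rootable structure of $\pi_1(S)$ that rules this out.
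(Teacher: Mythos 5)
Your strategy---show the $\Mod(S)$-orbit of a geometrically finite $\rho$ is closed by using the discreteness of its length spectrum to force each $[\phi_n^{-1}\gamma]$ into a finite set---is sound, and your first two paragraphs are essentially correct: the loxodromic case, and the parabolic case with the unique-roots control of the exponent $k_n$, both work. (Two small points: when $\partial S\neq\emptyset$ the convex core is not compact, though the finiteness of closed geodesics of length at most $L$ and the lower bound $\mu(\rho)$ still hold for geometrically finite manifolds by the usual thick--thin argument; and in your ``loxodromic'' case $\rho(\phi_n^{-1}\gamma)$ could still be parabolic for finitely many $n$, which is harmless.) This is a per-conjugacy-class version of what the paper does all at once: the paper maps a generating wedge of circles $G$ into $N$ with total length at most $K$, observes that its image lies in the compact set of points with injectivity radius in $[\varepsilon_0,\varepsilon_1]$, and invokes finiteness of homotopy classes of bounded-length immersions of $G$ into that compact set, which directly pins down the marking.

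The gap is in your last paragraph. After the diagonal extraction you know only that for each $j$ there is a time $N_j$ beyond which $[\phi_n^{-1}\gamma_j]$ is constant, and these times are not uniform in $j$. Consequently (i) the limiting assignment $[\gamma_j]\mapsto[c_j]$ is just a map on the set of conjugacy classes, and nothing you have written shows it is induced by an automorphism $\psi$ of $\pi_1(S)$: ``injective by faithfulness'' and ``surjective by Hopficity'' are statements about homomorphisms, and you have not produced one; and (ii) even granting $\psi$, concluding ``$\phi_n=\phi$ for all large $n$'' from the pointwise-inner property would require a single $\phi_n$ to agree with $\psi$ on \emph{all} conjugacy classes simultaneously, which the non-uniform stabilization does not give you. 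The repair is short and keeps your framework: apply your finiteness conclusion to a finite filling collection of simple closed curves $\gamma_1,\dots,\gamma_k$. Then the tuple $\left([\phi_n^{-1}\gamma_1],\dots,[\phi_n^{-1}\gamma_k]\right)$ takes only finitely many values, and since the stabilizer in $\Mod(S)$ of a filling system of isotopy classes is finite, the set $\{\phi_n\}$ is a finite subset of $\Mod(S)$; passing to a subsequence on which $\phi_n\equiv\phi$ gives $\rho_\infty=\rho\circ\phi_*^{-1}$, as desired. This finite-stabilizer step is exactly what the paper's generating-graph argument buys automatically, since a bound on the whole marked graph controls the marking in one stroke rather than one conjugacy class at a time.
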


\begin{proof}
Let $G$ be a graph in $S$ which is a bouquet of circles associated to a 
minimal generating set of $\pi_1(S)$.
Given  an element $(N,m)$ in the pre-image of $N$ under the quotient map
$\H(S,\partial S) \to \I(S,\partial S)$ one obtains a
graph $m(G)$ in $N$,
and the element $(N,m)$ is determined by the homotopy
class of this marked graph. If $\{(N,m_n)\}$ is an algebraically convergent sequence of elements of
the fiber then one may assume that $m_n (G)$ has length less than $K$ for some $K$
(perhaps after homotoping the maps $m_n$). It follows that there exist constants $0< \varepsilon_0 <\varepsilon_1$ 
so that if $x\in m_n(G)$ (for any $n$), then $\operatorname{inj}_N (x)\in [\varepsilon_0,\varepsilon_1]$. 
(The existence of $\varepsilon_1$ is obvious.  If curves penetrate arbitrarily deeply into the thin part, but are not contained entirely within the thin part, then they must be growing arbitrarily long, hence the lower bound $\varepsilon_0$.)
Let $C$ be the set of points in $N$ with injectivity radius in $[\varepsilon_0,\varepsilon_1]$.
Since $N$ is geometrically finite, $C$ is compact. Therefore, there exists at most
finitely many homotopy classes of immersions of $G$ into $C$ with total length at most
$K$.  It follows that there are at most finitely many distinct elements in the sequence
$\{(N,m_n) \}$, so the sequence is eventually constant.
It follows that the pre-image of $N$ is closed, and hence that $N$ is a closed point.
\end{proof}

Surprisingly, the set of geometrically finite points in $\AI(S,\partial S)$ is not Hausdorff. 

\begin{prop}\label{prop:!T2}
The geometrically finite points of $\AI (S,\partial S)$ do not form a Hausdorff space in the subspace topology.
\end{prop}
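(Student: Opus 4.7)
My plan is to adapt the construction of Proposition~\ref{T_1-separated}, replacing the pseudo-Anosov mapping class with a Dehn twist, so that the two algebraic limits produced now lie in the geometrically finite locus.

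Fix an essential simple closed geodesic $\alpha\subset S$ and let $\phi = T_\alpha\in\Mod(S)$ be the Dehn twist about $\alpha$. For generic $(X,Y)\in\mathcal{T}(S)\times\mathcal{T}(\overline{S})$ set $\rho_n = Q(X,\phi^{-n}(Y))$. This sequence lies in the Bers slice with top coordinate $X$ fixed, so by Bers' compactness theorem~\cite{Ber7} a subsequence converges algebraically to some $\rho \in \AH(S,\partial S)$. Since the Thurston limit of $\phi^{-n}(Y)$ is the projective class of $\alpha$, which is not a filling lamination, $\rho$ cannot be singly degenerate (whose ending lamination must be filling), and hence $\rho$ is geometrically finite with $\rho(\alpha)$ parabolic. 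The re-marked sequence $\phi^n \cdot \rho_n = Q(\phi^n(X), Y)$ now lies in the Bers slice with bottom coordinate $Y$ fixed, so by the same argument a further subsequence converges in $\AH(S,\partial S)$ to a geometrically finite $\sigma$ with $\sigma(\alpha)$ parabolic. Because $[\phi^n \rho_n] = [\rho_n]$ in $\AI(S,\partial S)$, along a common subsequence we obtain both $[\rho_n] \to [\rho]$ and $[\rho_n] \to [\sigma]$ in $\AI(S,\partial S)$, with $\{[\rho_n]\}$ a sequence of quasifuchsian (hence geometrically finite) points.

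What remains is to verify $[\rho] \neq [\sigma]$ in $\AI$: then the sequence $\{[\rho_n]\}$ of geometrically finite points converges to two distinct geometrically finite limits, preventing those limits from being separated by disjoint open sets in the subspace topology on the geometrically finite locus. For generic $(X,Y)$ this distinctness follows because $\rho$ is an algebraic limit of groups with fixed top conformal structure $X$, while $\sigma$ is an algebraic limit with fixed bottom $Y$, and an orientation-preserving mapping class of $S$ cannot interchange the top and bottom ends of a quasifuchsian manifold; so no $\psi\in\Mod(S)$ can carry $\rho$ to $\sigma$.

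The main obstacle I anticipate is pinning down the geometric finiteness of the algebraic limits $\rho$ and $\sigma$, particularly in light of the Kerckhoff--Thurston discontinuity of the Bers embedding at its cusp boundary points. The essential input is that non-filling Thurston limits produce geometrically finite (rather than singly degenerate) Bers-boundary points; this is the key distinction from the pseudo-Anosov setup of Proposition~\ref{T_1-separated}, where filling Thurston limits forced the algebraic limits to be geometrically infinite.
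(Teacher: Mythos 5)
Your construction is the one the paper uses: iterate a Dehn twist on one conformal factor, and compare the algebraic limits of $Q(X,\phi^{-n}(Y))$ and of its re-marking $Q(\phi^{n}(X),Y)$, both of which are geometrically finite cusp groups that project to distinct points of $\AI(S,\partial S)$. The one genuine gap is in your justification that the Bers-slice limit $\rho$ is geometrically finite. The dichotomy you invoke --- a Bers boundary point is either geometrically finite or singly degenerate with filling ending lamination --- is false: boundary groups of a Bers slice may also be \emph{partially degenerate}, i.e.\ geometrically infinite with accidental parabolics and a degenerate end supported on a proper subsurface. So ``the Thurston limit of $\phi^{-n}(Y)$ is the non-filling class $[\alpha]$'' does not by itself exclude a geometrically infinite limit, and your stated essential input (non-filling Thurston limits produce geometrically finite boundary points) is likewise false in general --- take, for instance, a limiting lamination consisting of $\alpha$ together with a lamination filling a complementary subsurface. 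What rescues the argument is that for iteration of a single Dehn twist the algebraic limit really is a cusp group with a rank one cusp at $\alpha$ and the remaining conformal boundary under control; this is precisely the content of the Kerckhoff--Thurston analysis \cite[Sec.3]{KT} (together with \cite{Ber1} for convergence), which is what the paper cites at this step. The step should be closed by invoking that result rather than the degeneracy dichotomy.

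On distinctness, your top/bottom-preservation argument is phrased for quasifuchsian manifolds, but the limits are not quasifuchsian, so it needs restating for cusp groups. The paper's route is more direct and avoids the issue: choose $X$ and $Y$ non-isometric; then one limit manifold has conformal boundary consisting of a surface isometric to $X$ together with a noded piece, while the other has a noded piece together with a surface isometric to $Y$, and since an isometry of the $3$-manifolds would induce an equivalence of conformal boundaries matching the non-noded components, the two limits cannot coincide in $\AI(S,\partial S)$.
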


\begin{proof} 
Let $D \in \Mod(S)$ be a Dehn twist about a non-peripheral simple closed curve on $S$.  Pick
$(X,Y)\in\mathcal{T}(S)\times\mathcal{T}(\overline{S})$ such that $X$ and $Y$ are not isometric.
Let $\rho_n = Q(X, D^n(Y))$ for all $n$.  
By  \cite{Ber1} and \cite[Sec.3]{KT}, $\{\rho_n\}$ converges algebraically to a manifold whose ``top'' conformal boundary component is isometric to $X$, and whose ``bottom'' conformal boundary has developed a rank one cusp.  Similarly, $\rho_n \circ D_*^{-n} = Q(D^{-n}(X), Y)$ converges algebraically to a manifold whose ``bottom'' conformal boundary component is isometric to $Y$.  In particular, the two limiting manifolds cannot be isometric, thus building a sequence in $\AI (S,\partial S)$ with two distinct limits.
\end{proof}

\medskip\noindent
{\bf Historical Remarks.} It follows from Bers' simultaneous uniformization \cite{Ber7} that the mapping class group $\Mod(S)$ acts properly
discontinuously, but not freely, on the interior $\QF(S)$ of $\AH(S,\partial S)$.
If we parameterize $\QF(S)$ as $\mathcal{T}(S)\times\mathcal{T}(\overline{S})$, then
the action is just the diagonal action where $\Mod(S)$ acts on each factor
in the usual manner. Its quotient is thus naturally a bundle, in the orbifold sense, over the moduli space $\mathcal{M}(S)$
with generic fiber homeomorphic to $\mathcal{T}(S)$.  
It has recently been shown \cite{BCM} that $\AH(S,\partial S)$ is the closure of $\QF(S)$
(see also \cite{Bromberg,Brock-Bromberg}).
The examples above (based on the cited work of others) show that $\Mod(S)$ does not act properly discontinuously on 
$\AH(S,\partial S)$ and hence does not act properly discontinuously
on $X(S,\partial S)$. Moreover, Souto and Storm \cite{St6} showed that $\Mod(S)$ acts 
topologically transitively on the closure of the set 
of points in the frontier of $\QF(S)$ whose conformal
boundary does not contain a component homeomorphic to $S$.
One can further show
that any open, $\Mod(S)$-invariant open subset of $X(S,\partial S)$ on which
$\Mod(S)$ acts properly discontinuously is disjoint from $\partial QF(S)$ (see
Lee \cite{lee}). It is conjectured
that every $\Mod(S)$-invariant open subset of $X(S,\partial S)$
on which $\Mod(S)$ acts properly
discontinuously is a subset of $\QF(S)$.

In the case that $S$ is a once-punctured torus, Bowditch \cite{Bow2} studied
the subset $\Phi_Q$ of $X(S,\partial S)$ consisting of representations $\rho$
such that if $\Omega$
is the set of simple closed curves on $S$, then $\rho(\gamma)$ is hyperbolic
for all $\gamma\in\Omega$ and there are only finitely many elements of
$\Omega$ such that $|\tr^2(\rho(\gamma))|\le 4$.   He conjectured that
$\Phi_Q=\QF(S)$. Tan, Wong and Zhang
\cite{TWZ} showed that $\Phi_Q$ is an open subset of $X(S,\partial S)$ on which
$\text{Mod}(S)$ acts properly discontinuously.  Cantat \cite{Cantat} has
used techniques from holomorphic dynamics to investigate the action of the mapping class group on
the character variety associated to the once-punctured torus.

\section{The augmented deformation set}\label{sec:augmented}

The goal of this section is to enlarge $\H(S, \partial S)$ into an augmented deformation set $\Hbar (S, \partial S)$.  The action of $\Mod (S)$ will extend to an action on 
$\Hbar(S, \partial S)$ and its quotient will give a compactification of
$\AI (S,\partial S)$.  The augmentation described here is analogous to the augmentation of Teichm{\"u}ller space by adding noded Riemann surfaces.

We again give two definitions of our augmented deformation set, with algebraic and geometric flavors.  We begin with the algebraic definition.
If $a$ is a multicurve on $S$, let $c(a)$ denote the collection of components
of $S-\mathcal{N}a$ where $\mathcal{N}a$ is an open collar neighborhood of
$a$ in $S$. An element 
$(\{\rho_R\}_{R\in c(a)},a)\in \Hbar(S, \partial S)$
is a multicurve $a$ together with an element
$\rho_R\in \H(R,\partial R)$ for each $R\in c(A)$.  An element of $\Hbar(S,\partial S)$ is geometrically finite (resp. Fuchsian) if each $\rho_R$ is geometrically finite (resp. Fuchsian).  Elements of $\Hbar(S,\partial S)$ using the multicurve $a$ define the stratum corresponding to $a$.

The geometric definition is somewhat more difficult to formulate, but it gives
more insight into the nature of elements of the set. 
Let $\mathcal{U}$ be the set of triples $(N, a, m)$ where:
\begin{enumerate}
\item  $N$ is a (possibly disconnected) oriented hyperbolic $3$-manifold.
\item\label{fragile}  $a \subset S$ is a multicurve with open collar 
neighborhood $\mathcal{N} a$.
\item  $m$ is a homotopy equivalence
\[m:   (S - \mathcal{N} a) \to N\]
taking $\partial(S - \mathcal{N} a)$ into the cusps of $N$.  
\end{enumerate}
A triple $(N,a,m)$ is geometrically finite if each component of $N$ is geometrically finite.  Similarly, it is Fuchsian if every component of $N$ is Fuchsian.

Of course this set $\mathcal{U}$ is too large.  Form an equivalence relation by declaring two elements $(L , a, \ell)$ and $(N, b, m)$ of $\mathcal{U}$ to be equivalent if $a = b$ and there exists an orientation preserving isometry $\iota: L \to N$ such that 
\[ m^{-1} \circ \iota \circ \ell: (S - \mathcal{N} a) \to (S - \mathcal{N} a)\]
is homotopic to the identity.  (The map $m^{-1}$ is a homotopy inverse of $m$.)
The augmented deformation set
$\Hbar (S,\partial S)$ is $\mathcal{U}$ modulo the above equivalence relation.

\section{The algebraic topology on the augmented deformation set}

We now define the algebraic topology for $\Hbar (S,\partial S)$, which extends the algebraic topology on $\H(S ,\partial S)$.  Its definition is motivated by Thurston's notion of a maximal subsurface of convergence.

\begin{thm} \label{owbt} \cite[Thm. 6.2]{Th4}
Given any sequence $\{ \rho_n \} \subset \AH(S, \partial S)$ there exists a subsequence $\{\rho_{j}\}$ and a (possibly empty, possibly disconnected) subsurface $R$ of $S$ with incompressible boundary such that:
\begin{enumerate}
\item For each component $F$ of $R$ the sequence of restrictions $\{\rho_{j}|_F\}$ converges in $\AH(F)$.
\item If $\Gamma$ is a nontrivial subgroup of $\pi_1 (S)$ such that the restrictions
$\{\rho_{j}|_\Gamma\}$ converge (up to conjugacy) on a subsequence of $\{\rho_{j}\}$ then $\Gamma$ is conjugate to a subgroup of $\pi_1 (F)$ for some component $F$ of $R$.
\end{enumerate}
\end{thm}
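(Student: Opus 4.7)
The plan is to combine a diagonal extraction of translation-length data on the countable set of simple closed curves with Thurston's compactification of $\AH(S,\partial S)$ by projective length functions, realized concretely via the Bestvina--Paulin construction of actions on $\mathbb{R}$-trees and Skora's identification of small minimal $\pi_1(S)$-trees with measured laminations on $S$. As a first step, I would pass to a subsequence so that for every isotopy class of simple closed curve $\gamma \subset S$, the translation length $\ell_n(\gamma) := \ell_{\rho_n}(\gamma)$ converges in $[0,\infty]$. If every such limit is finite, then since a $\PSLC$-character of a surface group is determined by finitely many traces of simple closed curves, the characters $\chi_{\rho_n}$ are bounded in $X(S,\partial S)$; a further subsequence converges, the limit is discrete and faithful by Chuckrow--Jorgensen, and one may take $R = S$.

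Otherwise some $\ell_n(\gamma_0) \to \infty$. Fixing a finite generating set $\{\gamma_i\}$ of $\pi_1(S)$ and rescaling by $L_n := \max_i \ell_n(\gamma_i)$, Bestvina--Paulin produces a minimal nontrivial small action of $\pi_1(S)$ on an $\mathbb{R}$-tree $T$ whose translation-length function is the projective limit of $\ell_n/L_n$. Skora's theorem then realizes $T$ as the dual tree of a measured geodesic lamination $\mu$ on $S$. Define $R$ to be the closure of $S$ minus a small open regular neighborhood of $\mathrm{supp}(\mu)$; its boundary is automatically incompressible since no leaf of $\mu$ bounds a disk. For each component $F$ of $R$, every essential simple closed curve in $F$ has zero intersection number with $\mu$, hence $\ell_n / L_n \to 0$ on such curves, so by the initial diagonal extraction the lengths $\ell_n$ themselves converge to finite values on all simple closed curves of $F$. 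Boundedness of the restricted characters in $X(F)$ together with closedness of $\AH(F)$ then yields convergence of $\{\rho_j|_F\}$ in $\AH(F)$, establishing property (1).

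To prove property (2), suppose $\Gamma \le \pi_1(S)$ is nontrivial and that some subsequence of $\{\rho_j|_\Gamma\}$ converges up to conjugacy. The translation lengths of elements of $\Gamma$ are then uniformly bounded, so in the rescaled limit $\Gamma$ acts on $T$ with vanishing translation lengths, i.e.\ elliptically with a common fixed point. By the standard dictionary for trees dual to measured laminations on surfaces, the elliptic subgroups of $T$ are, up to conjugacy, exactly the subgroups carried by complementary components of $\mathrm{supp}(\mu)$, namely by the components of $R$; hence $\Gamma$ is conjugate into $\pi_1(F)$ for some component $F$. The main obstacle is this final identification of elliptic subgroups with subsurface subgroups via the Skora dictionary, and making it robust in the presence of cusps and of isolated simple closed leaves of $\mu$ (which contribute edges of $T$ with nontrivial edge stabilizers); handling these cases carefully is what makes a second application of passage to a subsequence unavoidable and what ultimately pins down $R$ uniquely as the maximal subsurface of convergence.
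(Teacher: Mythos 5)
First, a point of comparison: the paper does not prove this statement at all --- it is quoted as Theorem~6.2 of Thurston \cite{Th4} and used as a black box --- so there is no internal argument to measure yours against. Judged on its own, your outline follows the Morgan--Shalen/Bestvina--Paulin/Skora route rather than Thurston's original pleated-surface argument, which is a legitimate strategy in principle, but it has a genuine gap at the decisive step.

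The gap is the sentence claiming that for a simple closed curve $\gamma$ in a component $F$ of your $R$, the vanishing $i(\gamma,\mu)=0$ (equivalently $\ell_n(\gamma)/L_n\to 0$) together with the initial diagonal extraction forces $\ell_n(\gamma)$ to converge to a \emph{finite} value. It does not: the diagonal extraction only gives convergence in $[0,\infty]$, and $\ell_n(\gamma)\to\infty$ with $\ell_n(\gamma)=o(L_n)$ is entirely possible --- for instance, lengths growing like $\log L_n$ on one subsurface while they grow like $L_n$ on another. Consequently the complement of $\operatorname{supp}(\mu)$ is in general strictly larger than the maximal subsurface of convergence, and property (1) fails for the $R$ you construct; conversely, if you shrink $R$ to restore (1) you must re-verify the maximality property (2), so the two conditions cannot be decoupled the way your argument decouples them. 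A single application of the rescaled ${\bf R}$-tree limit only detects the fastest-diverging directions; to locate the true $R$ one must iterate --- rescale again on each complementary piece where lengths still blow up sublinearly --- and induct on the complexity of the subsurfaces, which is where the incompressibility of $\partial R$ and property (2) actually get pinned down. Two smaller soft spots: the assertion that a $\PSLC$-character of a surface group is determined by traces of finitely many \emph{simple} closed curves is not the standard statement (characters are determined by traces of a finite generating set and their products, which need not be simple), and in (2) the subgroup $\Gamma$ need not be finitely generated, so ``every element elliptic implies a global fixed point'' requires more care for an ${\bf R}$-tree than you allow.
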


We will be particularly interested in the special case of this theorem where the subsequence is the entire sequence and $R$ is the complement of an open collar neighborhood of a multicurve.

Let $\mathcal{C}$ be the set of conjugacy classes of $\pi_1 (S)$. For 
an element $(\{\rho_R \}_{R \in c(a)}, a) \in \Hbar(S,\partial S)$ and $\gamma \in \mathcal{C}$ we will define the square of  the trace of an element $\gamma$ in $(\{\rho_R \}_{R \in c(a)})$ as an element of the Riemann sphere ${\bf CP}^1$. 
If for some $R' \in c(a)$, $\gamma$ can be realized by a closed curve in $R'$, then define $\tr^2((\{\rho_R \}_{R \in c(a)},a), \gamma)$ to be the square of the trace of 
$\rho_{R'} (\gamma)$. (Note that the trace of an element of $\PSLC$ is not well-defined
but the square of its trace is well-defined.) Otherwise, define $\tr^2((\{\rho_R \}_{R \in c(a)},a), \gamma)$ to be $\infty$.  This case occurs if and only if $\gamma$ essentially intersects the multicurve $a$.  Using this extended length function we define the set map
\begin{align*}
t_* : \Hbar(S,\partial S) &\to ({\bf CP}^1)^\mathcal{C} \\
(\{\rho_R \}_{R \in c(a)},a) & \mapsto \left\{ \gamma \mapsto \tr^2((\{\rho_R \}_{R \in c(a)},a), \gamma) \right\},
\end{align*}
where $({\bf CP}^1)^\mathcal{C}$ is the space of functions from $\mathcal{C}$ to 
${\bf CP}^1$ in the product topology (which is equivalent to pointwise convergence).

\begin{lem}\label{lem:inj}
The set map $t_*$ is injective.
\end{lem}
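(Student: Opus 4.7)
The plan is to prove $t_*$ is injective by recovering, from the trace function, first the multicurve $a$ and then the representations $\{\rho_R\}$ separately. Suppose $(\{\rho_R\}_{R\in c(a)},a)$ and $(\{\rho'_T\}_{T\in c(a')},a')$ have the same image under $t_*$. By construction $\tr^2$ is $\infty$ at $\gamma$ exactly when $\gamma$ essentially intersects the underlying multicurve, so the sets of conjugacy classes which essentially intersect $a$ and $a'$ coincide. Invoking the standard surface-topology fact that a simple closed curve freely homotopic into a subsurface can be isotoped into it, the isotopy classes of simple closed curves disjoint from $a$ agree with the corresponding classes for $a'$.

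The first stage concludes $a=a'$ from this. I will argue by contradiction: suppose $\alpha$ is a component of $a$ not in $a'$. Then $\alpha$ is disjoint from $a'$, so $\alpha$ lies inside some component $T$ of $S - \mathcal{N}a'$. Since $\alpha$ is non-peripheral in $S$ and not isotopic to any component of $a'$, $\alpha$ is essential and non-boundary-parallel in $T$. Hence $T$ is neither an annulus nor a three-holed sphere, and consequently there is a simple closed curve $\gamma \subset T$ with $i(\gamma,\alpha) > 0$. Such $\gamma$ is disjoint from $a'$ but essentially intersects $a$, contradicting the coincidence of disjointness sets.

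For the second stage, fix $R \in c(a)=c(a')$. For every conjugacy class $\gamma \in \pi_1(R)$, the inclusion $\pi_1(R) \hookrightarrow \pi_1(S)$ combined with $t_* = t_*'$ yields $\tr^2(\rho_R(\gamma)) = \tr^2(\rho'_R(\gamma))$. Since $\rho_R$ and $\rho'_R$ are discrete faithful, hence non-elementary, representations of the hyperbolic surface group $\pi_1(R)$ into $\PSLC$, the classical Fricke--Vogt / Culler--Shalen characterization in the $\PSLC$ setting implies that $\rho_R$ is determined up to conjugation by its squared-trace function. Therefore $\rho_R = \rho'_R$ in $\H(R,\partial R)$, completing the proof.

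The main obstacle is the first stage, where one must rigorously rule out the degenerate possibility that the component $T$ containing $\alpha$ is a three-holed sphere (whereupon no interior simple closed curve $\gamma$ exists to intersect $\alpha$). The key observation is that $\alpha$, being essential and non-boundary-parallel in $T$, cannot sit inside a three-holed sphere, so this case is vacuous. Similar care handles the annulus case, which is in any event excluded by the definition of a multicurve. Stage two is essentially a citation of standard character variety theory.
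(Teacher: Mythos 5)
Your proof is correct and follows the same two-step route as the paper: recover the multicurve $a$ as the unique one whose essentially intersecting conjugacy classes form exactly the $\infty$-locus of the trace function, then recover each $\rho_R$ from its squared traces on $\pi_1(R)$; the paper cites Theorem~1.3 of Heusener--Porti for this last step, which is precisely the $\PSLC$ form of the Fricke/Culler--Shalen rigidity you invoke. Your elaboration of the uniqueness of the multicurve (producing a simple closed curve in a component of $S-\mathcal{N}a'$ that meets a putative extra component $\alpha$ of $a$, after ruling out the annulus and three-holed sphere cases) supplies detail the paper leaves implicit, and is sound.
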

\begin{proof}
If  $(\{\rho_R \}_{R \in c(a)}, a) \in \Hbar(S,\partial S)$, then
$a$ is the unique multicurve such that  $t_*((\{\rho_R \}_{R \in c(a)},a))( \gamma)=\infty$
if and only if $\gamma$ intersects $a$ essentially (for all 
$\gamma\in\mathcal{C}$). Therefore, the image of $t_*$ determines the multicurve.
Theorem 1.3 in \cite{HP} implies that for each component $R\in c(a)$,
$\rho_R\in H(R,\partial R)$ is determined by  the restriction of 
$t_*((\{\rho_R \}_{R \in c(a)},a))$ to the set of conjugacy classes of elements of
$\pi_1(R)$. Therefore, $(\{\rho_R \}_{R \in c(a)}, a) $ is entirely determined by
$t_*((\{\rho_R \}_{R \in c(a)},a))$.
\end{proof}

Topologize $\Hbar(S,\partial S)$ using $t_*$ as a subspace of $({\bf CP}^1)^\mathcal{C}$.  Let us temporarily call the resulting topological space $t_*(\Hbar(S,\partial S))$.  As a subspace of a metric space, $t_*(\Hbar(S,\partial S))$ is Hausdorff and second countable.  To understand this topology better, we now give a more intrinsic formulation of its convergence.

\begin{defn}\label{defn:shattering}
A sequence $\{ \rho_n \} \subset \operatorname{AH}(S, \partial S)$ is a shattering sequence with shattering multicurve $a$ if:
\begin{enumerate}
\item\label{s1} For each component $F$ of $S - \mathcal{N} a$ the restrictions $\{\rho_{n}|_F\}$ converge in $\AH(F)$.
\item\label{s2} If $\Gamma$ is a nontrivial subgroup of $\pi_1 (S)$ such that the restrictions $\{\rho_{n}|_\Gamma\}$ converge (up to conjugacy) on a subsequence of
$\{\rho_{n}\}$ then $\Gamma$ is conjugate to a subgroup of $\pi_1 (F)$ for some component $F$ of $S - \mathcal{N} a$.
\end{enumerate}
In other words, in Theorem \ref{owbt} there is no need to pass to a subsequence, and $R$ is the complement of a multicurve.
\end{defn}

A sequence $(\{\rho^n_{R}\}_{R \in c(a_n)},a_n)$ in
$\Hbar(S,\partial S)$ is a {\em stable sequence} if the multicurve 
$a_n$ is constant, i.e. there exists a multicurve $a_{stable}$, called the 
{\em stable multicurve}, such that
$a_n=a_{\text{stable}}$ for all $n$.

\begin{defn}\label{defn:stablealgcon}
A stable sequence
$\{(\{\rho^n_{R}\}_{R\in c(a_{\text{stable}})}, a_{\text{stable}})\}$ in
$ \overline{\text{H}}(S,\partial S)$
converges algebraically to $(\{\rho_F\}_{F\in c(a)},a)$ if
\begin{enumerate}
\item $a_{\text{stable}}\subseteq a$. 
\item If $F\in c(a)$, then $\{\rho^n_R|_{\pi_1(F)}\}$ converges to $\rho_F$
in $\AH(F)$
(where $R$ is the element of $c(a_{\text{stable}})$ containing $F$).
\item For all $R\in c(a_{\text{stable}})$, $\{\rho^n_{R}\}$ is a shattering sequence  
in $\AH(R,\partial R)$ with shattering multicurve $a\cap R$. 
\end{enumerate}
\end{defn}

In particular, a sequence $\{(\rho^n_S,\emptyset) \}$ in $\Hbar(S,\partial S)$ converges algebraically to $(\{\rho_F\}_{F\in c(a)},a)$ if and only if it is a shattering sequence with shattering multicurve $a$ and $\{\rho^n_S|_{\pi_1(F)}\}$ converges
algebraically to $\rho_F$ for all $F\in c(a)$.

A (not necessarily stable) sequence in $\Hbar(S,\partial S)$ converges algebraically to $(\{\rho_F\}_{F\in c(a)},a) \in \Hbar(S,\partial S)$ if, after possibly discarding finitely many elements, it can be partitioned into stable subsequences, with distinct stable multicurves, all converging algebraically to $(\{\rho_F \}_{F\in c(a)},a)$.  (Since the stable multicurve for any stable subsequence lies in $a$,
this partition must be finite.) 

\begin{prop}\label{prop:equaltop}
A sequence in $\Hbar(S,\partial S)$ converges in $t_* (\Hbar(S,\partial S))$ if and only if it converges algebraically.
\end{prop}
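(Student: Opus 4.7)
The plan is to characterize both convergence notions as pointwise behavior of the trace-square functions on $\mathcal{C}$, using that $t_*(\Hbar(S,\partial S))$ carries exactly the topology of pointwise convergence into $({\bf CP}^1)^{\mathcal{C}}$.

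\smallskip\noindent\textbf{Forward direction.} First I would reduce to a single stable subsequence with constant multicurve $a_{\text{stable}}\subseteq a$ and verify $t_*(x_n)(\gamma)\to t_*(x)(\gamma)$ by a case split on $\gamma\in\mathcal{C}$. If $\gamma$ essentially intersects $a_{\text{stable}}$, both values are $\infty$. If $\gamma$ is disjoint from $a_{\text{stable}}$ but essentially intersects $a\cap R$ inside its component $R\in c(a_{\text{stable}})$, I would apply the shattering hypothesis in Definition \ref{defn:shattering} to $\Gamma=\langle \gamma\rangle$: since $\gamma$ is not conjugate into any $\pi_1(F)$ with $F\in c(a)$, $F\subseteq R$, no subsequence of $\rho^n_R(\gamma)$ converges up to conjugacy; for faithful representations this is equivalent to $\tr^2(\rho^n_R(\gamma))$ having no finite subsequential limit, forcing $t_*(x_n)(\gamma)\to\infty$ in ${\bf CP}^1$. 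Finally, if $\gamma$ does not essentially intersect $a\cap R$, then $\gamma\in\pi_1(F)$ for some $F\subseteq R$, and the algebraic convergence $\rho^n_R|_{\pi_1(F)}\to\rho_F$ in $\AH(F)$ from Definition \ref{defn:stablealgcon}(2) yields the required trace convergence.

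\smallskip\noindent\textbf{Backward direction.} Assuming $t_*(x_n)\to t_*(x)$ pointwise, the crux will be to show $a_n\subseteq a$ for all but finitely many $n$. Suppose not, and choose $\beta_n\in a_n\setminus a$ for infinitely many $n$. I would first rule out $\beta_n$ essentially intersecting any $\alpha\in a$: multicurve disjointness forces $\alpha\notin a_n$, giving $t_*(x_n)(\alpha)=\infty$ infinitely often, contradicting $t_*(x)(\alpha)=4$ (the parabolic trace-square). So each $\beta_n$ lies inside a single $F\in c(a)$; passing to a further subsequence (each fixed curve off $a$ can appear in $a_n$ only finitely often, by the same argument applied to any curve in $F$ essentially intersecting it), the $\beta_n$'s are pairwise distinct. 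A pants-decomposition argument on $F$ then produces a simple closed curve $\gamma\subseteq F$ essentially intersecting infinitely many $\beta_n$: iteratively enlarge a multicurve by choosing each $\gamma_{k+1}$ among the remaining $\beta_n$'s, using the contrapositive hypothesis to keep earlier curves disjoint from the tail; this terminates at a pants decomposition of $F$, where the remaining $\beta_n$ must each be isotopic to a pants-boundary curve, contradicting their distinctness. For such a $\gamma$, $t_*(x_n)(\gamma)=\infty$ infinitely often while $t_*(x)(\gamma)=\tr^2(\rho_F(\gamma))\in{\bf C}$, the required contradiction.

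\smallskip\noindent With $a_n\subseteq a$ cofinitely, I would then partition the tail by the value of $a_n$ (ranging over the finite set of sub-multicurves of $a$) into finitely many stable subsequences and verify each converges stably-algebraically to $x$. For a stable subsequence with constant $a_{\text{stable}}$ and $R\in c(a_{\text{stable}})$, pointwise trace convergence $\tr^2(\rho^n_R(\gamma))\to\tr^2(\rho_F(\gamma))$ for each $F\in c(a)$ with $F\subseteq R$ and $\gamma\in\pi_1(F)$, combined with irreducibility of the Kleinian surface group $\rho_F$, upgrades via the character-rigidity statement of \cite{HP} cited in Lemma \ref{lem:inj} to algebraic convergence $\rho^n_R|_{\pi_1(F)}\to\rho_F$ in $\AH(F)$; this yields clause (2) of Definition \ref{defn:stablealgcon} and clause (1) of Definition \ref{defn:shattering}. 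For clause (2) of the shattering definition, any $\Gamma\subseteq\pi_1(R)$ with subsequentially convergent restriction (up to conjugacy) must have $\tr^2(\rho^n_R(\gamma))$ converging finitely for every $\gamma\in\Gamma$, which in turn forces $t_*(x)(\gamma)\in{\bf C}$ and hence no element of $\Gamma$ essentially intersects $a\cap R$; the graph-of-groups decomposition of $\pi_1(R)$ over $a\cap R$ then forces $\Gamma$ into some $\pi_1(F)$ with $F\subseteq R$. The main obstacle throughout is the pants-decomposition argument in the backward direction, which is what rules out the pathological possibility of $a_n$ wandering unboundedly among simple closed curves of $F$.
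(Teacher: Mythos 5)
Your argument is correct and its overall skeleton matches the paper's: the forward direction is the same case analysis on conjugacy classes (trivially $\infty$ on both sides when $\gamma$ crosses $a_{\text{stable}}$, condition (\ref{s2}) of Definition \ref{defn:shattering} forcing traces to $\infty$ when $\gamma$ crosses $a\cap R$, and condition (2) of Definition \ref{defn:stablealgcon} otherwise), and the backward direction likewise first rules out essential intersection of $a_n$ with $a$ via the parabolic value $t_*(x)(\alpha)=4$, then shows $a_n\subseteq a$ cofinitely, then partitions into stable subsequences and upgrades pointwise trace convergence to convergence in $\AH(F)$ using \cite{HP}. The one genuinely different ingredient is the step $a_n\subseteq a$: the paper extracts components $a_n^0$ of $a_n-a$ converging in the space of projective measured laminations to some $\lambda$ disjoint from $a$, chooses $b\subset S-\mathcal{N}a$ with $i(b,\lambda)>0$, and invokes continuity of intersection number, whereas you argue combinatorially that a fixed curve off $a$ can recur in $a_n$ only finitely often and that an infinite family of distinct essential curves in a component $F$ must all meet some single simple closed curve of $F$, since otherwise one could build an arbitrarily large multicurve in $F$. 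Your route is more elementary and sidesteps the measured-lamination machinery entirely; the paper's is shorter on the page. Two points to tighten: first, for the upgrade to convergence in $\AH(F)$ you invoke the injectivity statement of \cite{HP} used in Lemma \ref{lem:inj}, which identifies a limit but does not produce one --- you also need the accompanying properness/convergence statement (the paper cites Corollary 2.3 of \cite{HP}), which gives precompactness from convergence of squared traces before rigidity pins down the limit; second, in verifying clause (\ref{s2}) of Definition \ref{defn:shattering} for a stable subsequence, the passage from ``every element of $\Gamma$ is conjugate into some $\pi_1(F)$'' to ``$\Gamma$ is conjugate into a single $\pi_1(F)$'' requires Serre's lemma for the action on the Bass--Serre tree of the splitting of $\pi_1(R)$ along $a\cap R$ (including the case of non-finitely-generated $\Gamma$); the paper is equally terse on this last point, so it is not a gap relative to the original.
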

\begin{proof}
Suppose a sequence in $\Hbar(S,\partial S)$ converges algebraically to $( \{\rho_R\}_{R \in c(a)}, a)$.  Without loss of generality, we can assume the sequence is stable.  Then condition (\ref{s2}) of Definition \ref{defn:shattering} guarantees that any curve essentially intersecting $a$ has trace going to $\infty$.  Condition (2) of Definition \ref{defn:stablealgcon}
guarantees the other traces converge, establishing convergence in $t_* (\Hbar(S,\partial S))$.

Next suppose a sequence $\left( \{\rho^n_R \}_{R \in c(a_n)}, a_n \right)$ converges to $( \{\rho_R \}_{R \in c(a)}, a)$ in $t_* (\Hbar(S,\partial S))$.  We must first check that, after possibly discarding finitely many terms, the sequence can be partitioned into stable sequences.  To begin, suppose there is a subsequence (denoted without subscripts) such that $a_n$ always essentially intersects $a$.  
Then, up to subsequence, $i(a_n, a^0) > 0$
for some component $a^0$ of $a$, so
$t_*( \{\rho_R \}_{R \in c(a)}, a)(a^0)=\infty$, since
$t_*( \{\rho^n_R \}_{R \in c(a_n)}, a_n)(a^0)=\infty$ for all $n$, which is 
a contradiction.  Therefore $i(a_n, a) = 0$ for all $n \gg 0$.

We next claim that $a_n\subset a$ for all $n \gg 0$. If not,
we may pass to a subsequence and choose a component $a_n^0$ of $a_n-a$
such that $\{a_n^0\}$,
viewed as a sequence of projective measured
laminations, converges to a (projective class of a)
measured lamination $\lambda$ on $S$. 
See \cite[Sec. 4]{BoTame} for a discussion of measured laminations and intersection
number. Since each $a_n^0$ is a simple closed curve in $S-a$, $\lambda$ is
disjoint from $a$. Pick an essential simple closed curve $b$ in $S - \mathcal{N} a$ such that $i(b, \lambda) > 0$.  Then by continuity of intersection number, for all $n \gg 0$ we have $i(b, a_n) >0$, implying that  $t_*( \{\rho_R \}_{R \in c(a)}, a)(b)=\infty$.
This is a contradiction, proving that $a_n \subseteq a$ for $n \gg 0$.

After discarding finitely many terms the sequence can therefore be partitioned into a finite set of stable sequences. Suppose that $F\in c(a)$. On any stable subsequence,
with associated multicurve $a_{\text{stable}}$, there exists $R\in c(a_{\text{stable}})$
such that   $F\subset R$. Since $\tr^2(\rho^n_R(c))$ converges to $\tr^2(\rho_F(c))$
for any conjugacy class $c$ of an element of $\pi_1(F)$, it follows that
$\{\rho^n_R|_{\pi_1(F)}\}$ converges to $\rho_F$ in $AH(F)$ (see
Corollary 2.3 in \cite{HP}).
From here the definition of our extended trace function implies condition (\ref{s2}) of Definition \ref{defn:shattering} for each stable subsequence.
\end{proof}

With their equivalence established, we refer to the topology on
$t_* (\Hbar(S,\partial S))$ as the algebraic topology and denote it by $\AHbar (S,\partial S)$.
This topology is closely related to the notion of algebraic convergence on subsurfaces which played a role in the proof of the Ending Lamination Conjecture \cite[Sec, 6]{BCM}.
One expects the augmented space to be at least as topologically complicated as $\AH(S,\partial S)$, which is known, for example, not to be locally connected \cite{BrombergPT,Magid}.

\section{The augmented moduli space} \label{sec:ms}

The goal of this section is to define the quotient augmented moduli space and establish that it is sequentially  compact.

If $\phi\in \Mod(S)$
and $(\{\rho_F\}_{F\in c(a)},a)\in \AHbar (S,\partial S)$, then we can choose a representative, also called $\phi$, so that $\phi(a)$ is a (geodesic) multicurve.
The mapping class $\phi$ takes $(\{\rho_F\}_{F\in c(a)},a)$ to
$(\{\rho_F\circ \phi_*^{-1}\}_{\phi(F)\in c(\phi(a))},\phi(a)\})$.
Stated geometrically, $\phi$ takes $(N,a,m)$ to $(N,\phi(a),m\circ \phi^{-1}|_{S-\mathcal{N}\phi(a)})$.
It is easy to check that the each mapping class induces a homeomorphism of 
$\AHbar (S,\partial S)$
and that we obtain a continuous extension of the action of $\text{Mod}(S)$ on 
$\AH(S,\partial S)$.

We then define the natural quotient space, with its induced quotient algebraic
topology by
\[ \AIbar (S,\partial S) 
    := {\AHbar (S,\partial S) }/ \Mod(S).\]
The key feature of our augmented moduli space is that it is sequentially compact.

\begin{thm} \label{compactness}
$\AIbar (S,\partial S)$ is a sequentially compact topological space.
\end{thm}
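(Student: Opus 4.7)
The plan is to lift an arbitrary sequence from $\AIbar(S,\partial S)$ up to $\AHbar(S,\partial S)$, exploit the freedom to re-mark by elements of $\Mod(S)$, and extract a subsequence that converges algebraically; continuity of the quotient map then finishes the job. So let $\{[\tilde\rho_n]\}$ be a sequence in $\AIbar(S,\partial S)$, and choose lifts $\tilde\rho_n = (\{\rho_R^n\}_{R\in c(a_n)},a_n)\in \AHbar(S,\partial S)$.

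First I reduce to a stable sequence. Multicurves on the finite-type surface $S$ fall into finitely many $\Mod(S)$-orbits, classified by the topological type of the pair $(S,a)$. Passing to a subsequence and applying appropriate mapping classes, I may therefore assume $a_n=a$ for a fixed multicurve $a$ and all $n$, i.e. the sequence is stable with stable multicurve $a$. Next I re-mark within each component so that every component sequence becomes shattering. For each $R\in c(a)$, the sequence $\{\rho_R^n\}$ lies in $\AH(R,\partial R)$, and applying Thurston's Theorem \ref{owbt} together with the Canary--Minsky--Taylor re-marking theorem \cite{CMT} I obtain, after passing to a subsequence, mapping classes $\phi_n^R$ of $R$ (fixing $\partial R$) such that the re-marked sequence $\{\rho_R^n\circ(\phi_n^R)_*^{-1}\}$ is shattering in $\AH(R,\partial R)$ with some shattering multicurve $b_R\subset R$. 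Because the $\phi_n^R$ fix $\partial R$, they extend by the identity across $\mathcal{N}a$ and assemble into a single $\phi_n\in \Mod(S)$ preserving $a$; replacing $\tilde\rho_n$ by $\phi_n\cdot\tilde\rho_n$ does not alter the class $[\tilde\rho_n]\in \AIbar(S,\partial S)$.

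With all component sequences shattering, set $a':=a\cup\bigcup_{R\in c(a)} b_R$ and, for each $F\in c(a')$, let $\rho_F$ denote the algebraic limit in $\AH(F)$ of the restrictions $\{\rho_R^n|_{\pi_1(F)}\}$, where $R$ is the unique element of $c(a)$ containing $F$. Definition \ref{defn:stablealgcon} then says the re-marked stable sequence converges algebraically to $(\{\rho_F\}_{F\in c(a')},a')\in \AHbar(S,\partial S)$; Proposition \ref{prop:equaltop} translates this into convergence in the algebraic topology. Projecting by the continuous quotient map $\AHbar(S,\partial S)\to \AIbar(S,\partial S)$ yields a convergent subsequence of $\{[\tilde\rho_n]\}$.

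The main obstacle is the re-marking step that turns each $\{\rho_R^n\}$ into a shattering sequence. Theorem \ref{owbt} by itself merely produces a subsurface of convergence with incompressible boundary, and this subsurface need not be the complement of a multicurve, so one cannot directly drop the output of \ref{owbt} into the $\AHbar$ framework of Definition \ref{defn:shattering}. The role of \cite{CMT} is precisely to provide mapping classes promoting Thurston's convergence subsurface to the complement of a multicurve, thereby converting a generic sequence in $\AH$ into a shattering one; once this tool is available, the rest of the argument is a careful bookkeeping exercise in the definitions of stable and algebraic convergence on $\AHbar(S,\partial S)$.
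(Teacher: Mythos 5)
Your proof is correct and follows essentially the same route as the paper: reduce to a fixed multicurve using the finiteness of topological types of multicurves, then re-mark componentwise via the Canary--Minsky--Taylor theorem combined with Theorem \ref{owbt} (the paper packages this as Corollary \ref{wwn}) to produce a shattering, hence algebraically convergent, subsequence. Your added remarks on extending the component mapping classes across $\mathcal{N}a$ and on why Theorem \ref{owbt} alone does not suffice are accurate glosses on the same argument.
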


This compactness result is essentially a corollary of a result of Canary-Minsky-Taylor, 
together with Theorem \ref{owbt}.  We restate the result of Canary-Minsky-Taylor in the setting of $\AH (S,\partial S)$.

\begin{thm} \label{CMT compactness} \cite[Thm.5.5]{CMT}
Let $\{\rho_n\}$ be a sequence in $\AH (S,\partial S)$.  Then there exists a 
subsequence $\{\rho_j\}$, a sequence $\{\phi_j\}$ in $\Mod(S)$, and a multicurve
$a$ in $S$, such that if $R$ is a component of $S-\mathcal{N}a$ then 
$\{\rho_j\circ(\phi_j)_*^{-1} |_{\pi_1(R)}\}$ converges in $\AH(R)$ to an element of
$\AH(R,\partial R)$.
\end{thm}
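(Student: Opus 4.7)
The plan is to prove Theorem \ref{CMT compactness} by induction on the complexity $\xi(S) = 3g(S) + b(S) - 3$, where the inductive hypothesis is a mild strengthening permitting sequences in $\AH(S)$ whose $\partial S$-traces converge to parabolic values. The key extraction tool is Theorem \ref{owbt}, supplemented by a re-marking procedure on non-annular complementary pieces. When $\xi(S) \le 0$ the space $\AH(S,\partial S)$ is a point and the claim is trivial; assume $\xi(S) > 0$.

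First I apply Theorem \ref{owbt} to extract a subsequence $\{\rho_j\}$ and a maximal incompressible subsurface $R_0$ on whose components the restrictions converge. To promote $R_0$ to the complement of a multicurve with parabolic boundary data, I examine each interior boundary curve $c$ of $R_0$. Since $\tr^2 \rho_j(c)$ converges, $c$ becomes either parabolic or loxodromic in the limit. In the loxodromic case, the subgroup generated by $\pi_1(F_1)$, $\pi_1(F_2)$, and $c$ (where $F_1,F_2$ are the adjacent components of $R_0$) still has convergent restrictions, so I absorb a collar of $c$ and merge components. This process terminates because the number of interior boundary curves strictly decreases, producing a subsurface $R$ whose interior boundary curves all converge to parabolics; let $a_R$ be this parabolic multicurve.

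Next I handle the non-annular components $Q_1,\dots,Q_k$ of $S - R$ that are not annular collars of $a_R$. For each such $Q_i$, the boundary curves lie in $a_R$ and have traces converging to parabolic values, so $\{\rho_j|_{\pi_1(Q_i)}\}$ is an admissible sequence for the strengthened inductive hypothesis, applicable since $\xi(Q_i) < \xi(S)$. This produces mapping classes $\psi_j^i \in \Mod(Q_i)$ and a multicurve $a_i \subset Q_i$ such that $\{\rho_j \circ (\psi_j^i)_*^{-1}|_{\pi_1(F)}\}$ converges in $\AH(F,\partial F)$ for each component $F$ of $Q_i - \mathcal{N}a_i$. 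Passing to a common subsequence, extending each $\psi_j^i$ by the identity outside $Q_i$, and combining yields $\phi_j \in \Mod(S)$; setting $a = a_R \cup \bigcup_i a_i$ gives the required multicurve.

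The principal obstacle is the merging-absorption argument: showing that when a boundary curve of $R_0$ is loxodromic in the limit, one may extend $R_0$ across $c$ without losing convergence of the combined representation. This rests on the fact that the cyclic subgroup $\langle c \rangle$ has a genuinely converging $\rho_j$-restriction, so an amalgamated-product argument (tracking generators across $c$) yields convergence on the enlarged subsurface, contradicting maximality unless the merged subsurface still satisfies Thurston's conditions. A secondary technical point is formulating the strengthened inductive hypothesis precisely and verifying that asymptotic parabolicity of boundary traces suffices to produce parabolic-boundary limits in $\AH(F,\partial F)$; this follows from the closedness of the parabolic locus in the character variety $X(Q,\partial Q)$ and the continuity of $\tr^2$.
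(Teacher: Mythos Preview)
The paper does not prove Theorem \ref{CMT compactness}; it is quoted from \cite{CMT} as a black box and then combined with Theorem \ref{owbt} to obtain Corollary \ref{wwn}. So there is no ``paper's own proof'' to compare against, only the question of whether your argument stands on its own.

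It does not. The essential content of Theorem \ref{CMT compactness} is the production of the re-marking maps $\phi_j\in\Mod(S)$, and your argument never actually produces any. You invoke Theorem \ref{owbt} to obtain a maximal subsurface $R_0$ of convergence, massage its boundary, and then appeal to the inductive hypothesis on the complementary pieces $Q_i$ to manufacture $\psi_j^i$. But Theorem \ref{owbt} carries no re-marking at all, so the $\phi_j$ can only arise from the induction; and the induction never gets off the ground. Concretely, take $S$ closed, $\rho\in\AH(S)$ quasifuchsian, $\phi$ pseudo-Anosov, and $\rho_n=\rho\circ(\phi^n)_*$. Then $|\tr^2\rho_n(\gamma)|\to\infty$ for every nontrivial $\gamma$, so Thurston's subsurface is $R_0=\emptyset$. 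Your ``complementary piece'' is then $Q_1=S$ with $\xi(Q_1)=\xi(S)$, and the induction stalls. Even when $\partial S\neq\emptyset$, $R_0$ may consist only of peripheral annuli, and the complement is again homeomorphic to $S$. In short, your scheme bootstraps re-marking from lower complexity without ever exhibiting a mechanism that creates it.

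A secondary issue: your ``merging across loxodromic boundary curves'' step is phrased as if $c$ separates two components $F_1,F_2$ of $R_0$, but an interior boundary curve of $R_0$ typically separates $R_0$ from $S\setminus R_0$, where nothing converges. When $c$ genuinely lies between $F_1$ and $F_2$, the merging you describe would enlarge $R_0$ and contradict the maximality already guaranteed by Theorem \ref{owbt}; so either that step is vacuous or you are implicitly re-proving part of Thurston's theorem. Either way it does not advance toward the complement. The actual proof in \cite{CMT} proceeds quite differently, via pleated surfaces, short curves, and a direct construction of the re-markings.
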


If we combine the above result with Theorem \ref{owbt}, we
see that we may always re-mark a subsequence to find a shattering subsequence.
The proof consists simply of applying Theorem \ref{owbt} to the sequence
produced by Theorem \ref{CMT compactness}.  The multicurve $b$ in the
statement of Corollary \ref{wwn} is always contained in the multicurve
associated to the subsequence produced by Theorem \ref{CMT compactness}.

\begin{cor}\label{wwn}
Let $\{\rho_n\}$ be a sequence in $\AH(S,\partial S)$.  Then there exists a 
subsequence $\{\rho_j\}$, a sequence $\{\phi_j\}$ in $\Mod(S)$, and a multicurve
$b$ in $S$, such that $\{\rho_j\circ (\phi_j)_*^{-1}\}$ is a shattering sequence with
shattering multicurve $b$.
\end{cor}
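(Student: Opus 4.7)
The plan is to combine Theorem~\ref{CMT compactness} and Theorem~\ref{owbt} in that order, with the role of the first being to fix a good remarking and the role of the second being to ensure the resulting sequence is genuinely shattering.

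First, I would apply Theorem~\ref{CMT compactness} to $\{\rho_n\}$ to extract a subsequence $\{\rho_j\}$, mapping classes $\{\phi_j\}\subset\Mod(S)$, and a multicurve $a\subset S$ such that, writing $\sigma_j:=\rho_j\circ(\phi_j)_*^{-1}$, the restriction $\{\sigma_j|_{\pi_1(R)}\}$ converges in $\AH(R,\partial R)$ for every component $R$ of $S-\mathcal{N}a$. At this point the sequence $\{\sigma_j\}$ lives in $\AH(S,\partial S)$ and we can feed it into Theorem~\ref{owbt}: pass to a further subsequence (still called $\{\sigma_j\}$) and obtain a subsurface $R'\subset S$ with incompressible boundary satisfying conditions (1) and (2) of that theorem.

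The central step is to identify $R'$ with the complement of a multicurve $b\subseteq a$. By condition~(2) of Theorem~\ref{owbt}, every nontrivial subgroup $\Gamma\le\pi_1(S)$ on which a subsequence of $\{\sigma_j\}$ converges up to conjugacy must be conjugate into $\pi_1(F)$ for some component $F$ of $R'$. Applying this with $\Gamma=\pi_1(R)$ for each component $R$ of $S-\mathcal{N}a$---which is legitimate by our choice of $a$---shows that each such $R$ is isotopic into some component of $R'$. Hence $S-\mathcal{N}a\subseteq R'$ up to isotopy, so $S\setminus R'$ is contained in $\mathcal{N}a$. Since $R'$ has incompressible boundary in $S$, its non-peripheral boundary components form a multicurve $b$, and the containment just established forces $b\subseteq a$ (each boundary component of $R'$ is either peripheral in $S$ or isotopic to a curve of $a$). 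Isotoping if necessary, $R'=S-\mathcal{N}b$.

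With this identification the two defining conditions of a shattering sequence are immediate: condition~(\ref{s1}) of Definition~\ref{defn:shattering} is precisely condition~(1) of Theorem~\ref{owbt} applied to the components of $R'=S-\mathcal{N}b$, and condition~(\ref{s2}) of Definition~\ref{defn:shattering} is verbatim condition~(2) of Theorem~\ref{owbt}. Thus $\{\sigma_j\}=\{\rho_j\circ(\phi_j)_*^{-1}\}$ is a shattering sequence with shattering multicurve $b$, as required.

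The only point that requires care---and what I regard as the main obstacle---is the geometric/topological step that promotes the abstract subsurface $R'$ produced by Theorem~\ref{owbt} to a complement of a multicurve contained in $a$. Everything else is bookkeeping, but here one must use both the incompressibility of $\partial R'$ and the fact that the pre-existing pieces $R\subset S-\mathcal{N}a$ already carry convergent restrictions, so they cannot be broken up further by $R'$.
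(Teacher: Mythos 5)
Your proposal is correct and follows exactly the paper's route: the paper's proof is simply ``apply Theorem~\ref{owbt} to the sequence produced by Theorem~\ref{CMT compactness},'' noting that the resulting multicurve $b$ is contained in the multicurve from Theorem~\ref{CMT compactness}. You have merely supplied the details the paper leaves implicit, namely the verification that the subsurface from Theorem~\ref{owbt} must absorb every component of $S-\mathcal{N}a$ and hence is the complement of a multicurve $b\subseteq a$.
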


\begin{proof}[Proof of Theorem \ref{compactness}]
Let $\{(\{\rho^n_{R_n}\}_{R_n\in c(a_n)},a_n)\}$ be a sequence in
$\AHbar (S,\partial S)$.  Since there exists only a
finite number of multicurves, 
up to homeomorphism,  on a compact surface, we may pass to a subsequence
$(\{\rho^j_{R_j}\}_{R_j\in c(a_j)},a_j)$ and find a sequence $\{\phi_j\}$ in
$\Mod(S)$ so that $\phi_j(a_j)$ is the same multicurve, say $a$, for all $j$.
Then  each $\phi_j(\{\rho^j_{R_j}\}_{R_j\in c(a_j)},a_j)$ can be rewritten as
$(\{\sigma^j_F\}_{F\in c(a)},a)$.  We then apply Corollary
\ref{wwn} to the sequence $\{\sigma^j_F\}$ successively for each component $F$
of $c(a)$,  possibly further remarking the surface $F$ in the process, 
to find an algebraically convergent subsequence of $\{(\{\sigma^j_F\}_{F\in c(a)},a)\}$.
Since every sequence in $\AHbar (S,\partial S)$ has a subsequence
which can be re-marked by elements of $\Mod (S)$ so that it converges in
$\AHbar (S,\partial S)$, we conclude that
$\AIbar (S,\partial S)$ is sequentially compact.
\end{proof}

\medskip\noindent
{\bf Remark.} Geometrically finite points in
$\AIbar (S,\partial S)$ are not necessarily closed. (Recall 
Proposition \ref{prop:T1} proved that geometrically finite points are closed
in $\AI (S,\partial S)$.) To see this, consider the following example.
Let $b \subset S$ be a simple closed separating geodesic.  
Let $\rho \in \H(S,\partial S)$ be geometrically finite with exactly one rank one parabolic corresponding to the curve $b \subset S$. 
Let $D: S \to S$ indicate a Dehn twist along $b$. 
Consider the two component element $( \{\rho|_R \}_{R \in c(b)}, b) \in \Hbar(S,\partial S)$.
Clearly $\rho$ and $( \{\rho|_R \}_{R \in c(b)}, b)$ project to distinct points in
$\AIbar(S,\partial S)$.  Nonetheless, the sequence $\{ \rho \circ D^n \}$ converges algebraically to $( \{\rho|_R \}_{R \in c(b)}, b)$.  This shows the projection of $\rho$ to $\AIbar(S,\partial S)$ is not a closed point.

\section{The Fuchsian locus and the Deligne-Mumford compactification}

One forms the augmented Teichm\"uller space $\overline{\mathcal{T}}(S)$ by
appending  the Teichm\"uller space $\mathcal{T}(S-\mathcal{N}a)$ associated to
the complement of every multicurve $a$ on $S$. Notice that one may
associate to any point in $\mathcal{T}(S-\mathcal{N}a)$ a unique Fuchsian
element $(\{\rho_R\}_{R \in c(a)}, a)$ of $\Hbar(S,\partial S)$. Therefore one
may identify $\overline{\mathcal{T}}(S)$ with the set
$\overline{\mathcal{F}}(S, \partial S) \subset \Hbar(S, \partial S)$ of 
Fuchsian elements of $\Hbar(S,\partial S)$.   We call
$\overline{\mathcal{F}}(S, \partial S)$ the Fuchsian locus. One may check that
$\overline{\mathcal{F}}(S, \partial S)$ (with the algebraic topology) is actually
homeomorphic to $\overline{\mathcal{T}}(S)$. 
Since the Deligne-Mumford compactification  $\overline{\mathcal{M}}(S)$ of
$\mathcal{M}(S)$ arises as the quotient
of the augmented Teichm\"uller space $\overline{\mathcal{T}}(S)$ under the
action of $\Mod(S)$, we may
identify $\overline{\mathcal{M}}(S)$ with the quotient of
the Fuchsian locus $\overline{\mathcal{F}}(S, \partial S)$ in
$\Ibar (S,\partial S)$.
We refer the reader to Wolpert's survey article \cite{Wolpert} for a discussion of the
basic properties of augmented Teichm{\"u}ller space
and its relationship to the Deligne-Mumford compactification.

It is easy to check that
the Fuchsian locus is closed in $\AHbar (S,\partial S)$ and
invariant under the action of $\Mod(S)$, implying that 
$\overline{\mathcal{M}}(S)$ is identified with a closed subset of
$\AIbar (S,\partial S)$.

\begin{prop} \label{FE = DM}
The natural  embedding $\iota: \mathcal{M}(S) \to \AI(S, \partial S)$, sending a hyperbolic surface to its corresponding Fuchsian $3$-manifold, extends to an
embedding of the Deligne-Mumford compactification $\overline{\mathcal{M}}(S) \to \AIbar(S, \partial S)$ with image the set of Fuchsian $3$-manifolds.
\end{prop}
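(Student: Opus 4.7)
The plan is to exploit the identification of the Fuchsian locus $\overline{\mathcal{F}}(S,\partial S) \subseteq \AHbar(S,\partial S)$ with the augmented Teichm\"uller space $\overline{\mathcal{T}}(S)$ recorded in the preamble to the proposition. That identification is $\Mod(S)$-equivariant --- the $\Mod(S)$-actions on both sides come from precomposition with representative homeomorphisms of $S$, and these two actions agree under the holonomy correspondence stratum by stratum --- so passing to $\Mod(S)$-quotients produces a continuous injection
\[
\bar\iota : \overline{\mathcal{M}}(S) \;=\; \overline{\mathcal{T}}(S)/\Mod(S) \;\longrightarrow\; \overline{\mathcal{F}}(S,\partial S)/\Mod(S) \;\hookrightarrow\; \AIbar(S,\partial S)
\]
that visibly extends $\iota : \mathcal{M}(S) \to \AI(S,\partial S)$. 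The problem is thus to upgrade $\bar\iota$ to a topological embedding.

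Because $\AIbar(S,\partial S)$ is non-Hausdorff (Proposition \ref{prop:!T2}), the usual ``continuous bijection from a compact space'' argument is unavailable, so I would work point-set theoretically. The key observation is that the quotient map $q : \AHbar(S,\partial S) \to \AIbar(S,\partial S)$ is open, which is automatic for a group quotient: for any open $U \subseteq \AHbar(S,\partial S)$ one has $q^{-1}(q(U)) = \bigcup_{\phi \in \Mod(S)} \phi(U)$, which is open. Combined with the preamble's assertion that $\overline{\mathcal{F}}(S,\partial S)$ is closed and $\Mod(S)$-invariant in $\AHbar(S,\partial S)$, openness of $q$ forces $q(\overline{\mathcal{F}}(S,\partial S))$ to be closed in $\AIbar(S,\partial S)$, as the complement of $q$ applied to the open invariant set $\AHbar(S,\partial S) \setminus \overline{\mathcal{F}}(S,\partial S)$.

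Next I would invoke the following general point-set lemma: if $G$ acts on $X$, the quotient $X \to X/G$ is open, and $A \subseteq X$ is $G$-invariant, then the natural continuous bijection $A/G \to q(A)$ is a homeomorphism onto $q(A)$ with its subspace topology. The only thing to check is the set-theoretic identity $q(U \cap A) = q(U) \cap q(A)$ for open $U \subseteq X$; the nontrivial inclusion uses $G$-invariance of $A$, since any $[x] \in q(U) \cap q(A)$ has representatives $u \in U$ and $a \in A$ related by $u = \phi \cdot a$, and then $u \in \phi \cdot A = A$, so $u \in U \cap A$. Applying this with $X = \AHbar(S,\partial S)$, $G = \Mod(S)$, and $A = \overline{\mathcal{F}}(S,\partial S)$ promotes $\bar\iota$ to a homeomorphism of $\overline{\mathcal{M}}(S)$ onto the closed subset $q(\overline{\mathcal{F}}(S,\partial S))$, which is by construction the set of Fuchsian $3$-manifolds in $\AIbar(S,\partial S)$.

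The main obstacle lies not in either of these formal steps in isolation but in trusting the preamble's identification $\overline{\mathcal{T}}(S) \cong \overline{\mathcal{F}}(S,\partial S)$ --- specifically, that the algebraic topology on the Fuchsian locus defined via the trace-squared embedding $t_*$ reproduces the classical pinching topology on augmented Teichm\"uller space, including the gluing between strata where curves in the nodal multicurve have $|\tr^2| \to \infty$. Once that identification is taken as given and seen to be equivariant, the descent to $\AIbar(S,\partial S)$ is essentially formal, resting on the openness of $q$ and the compatibility of subspace and quotient topologies on a $G$-invariant subspace.
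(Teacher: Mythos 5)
Your proposal is correct and follows essentially the same route as the paper, which gives no separate proof beyond the preceding paragraph: identify the augmented Teichm\"uller space $\Mod(S)$-equivariantly with the closed, invariant Fuchsian locus $\overline{\mathcal{F}}(S,\partial S)$ and pass to quotients. You simply make explicit the point-set steps the paper labels ``one may check'' and ``it is easy to check'' --- openness of the quotient map and the identity $q(U\cap A)=q(U)\cap q(A)$ for invariant $A$ --- and correctly isolate the equivariant homeomorphism $\overline{\mathcal{T}}(S)\cong\overline{\mathcal{F}}(S,\partial S)$ as the one substantive input taken on faith.
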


\medskip\noindent
{\bf Remark:} We recall that if the augmented Teichm\"uller space is not a point,
then it fails to be locally compact. Since $\overline{\mathcal{T}}(S)$ is homeomorphic
to a closed subset of $\AHbar(S,\partial S)$, it follows that $\AHbar(S,\partial S)$
also fails to be locally compact.

\section{Other topologies}

In \cite{Th3}, Thurston discusses two other topologies on $\H(S,\partial S)$,
the strong topology and the quasi-isometric topology. Both 
extend naturally to topologies on $\Hbar(S,\partial S)$.

A sequence $\{ (N_n,m_n)\}$ in $\H(S,\partial S)$ converges strongly to 
$(N,m)\in \H(S,\partial S)$ if there exists a sequence 
$\{ h_n:N\to N_n\}$ of homotopy equivalences which $C^\infty$-converge to an isometry
on every compact subset of $N$ such that $h_n\circ m$ is homotopic to $m_n$ for all
$n$.  The key difference with the definition of algebraic convergence
is that $\{ h_n\}$ converges to an isometry, rather than just a local isometry.
The deformation space $\H(S,\partial S)$ equipped with the
strong topology is denoted $\GH(S,\partial S)$. 
The ``G" is due to Thurston, who called this the geometric topology. 

We may readily generalize this to the setting of the augmented deformation
space $\Hbar(S,\partial S)$. We say that a sequence $\{(N_n,a_ n,m_n)\}$
in $\Hbar(S,\partial S)$ converges strongly to $(N,a,m)$ if 
$\{(N_n,a_ n,m_n)\}$ converges algebraically to $(N,a,m)$
and there
exists a sequence of continuous maps $\{h_n:N\to N_n\}$ such that $h_n\circ m$ is homotopic
to $m_n|_F$, for all $n$, on every component $F$ of $S-\mathcal{N}a$, and
$\{h_n\}$ $C^\infty$-converges to an isometry on every compact subset of $N$.
The deformation space $\Hbar(S,\partial S)$ equipped with the
strong topology is denoted $\GHbar(S,\partial S)$. 

A sequence $\{ (N_n,m_n)\}$ in $\H(S,\partial S)$ converges in the quasi-isometric
topology to 
$(N,m)\in \H(S,\partial S)$ if, for all large enough $n$, there exists a 
$K_n$-bilipschitz diffeomorphism $h_n:N\to N_n$
such that $h_n\circ m$ is homotopic to $m_n$  with $\lim K_n=1$.
The deformation space $\H(S,\partial S)$ equipped with the
quasi-isometric topology is denoted $\QH(S,\partial S)$. More generally,
a sequence $\{(N_n,a_ n,m_n)\}$
in $\Hbar(S,\partial S)$ converges in the quasi-isometric topology to $(N,a,m)$
if, for all large enough $n$, $a_n=a$ and there exists a $K_n$-bilipschitz 
diffeomorphism $h_n:N\to N_n$ such that $h_n\circ m$ is homotopic
to $m_n$ and $\lim K_n=1$.
The deformation space $\Hbar(S,\partial S)$ equipped with the
quasi-isometric topology is denoted $\QHbar(S,\partial S)$. 

One may readily check that $\Mod(S)$ acts on
both $\GHbar(S,\partial S)$ 
and $\QHbar(S,\partial S)$
as a group of homeomorphisms. So one obtains strong and quasi-isometric
topologies on the quotient space $\Ibar(S,\partial S)$.

\begin{align*}
\GIbar(S,\partial S) & := 
    {\GHbar(S,\partial S)}/ \Mod(S), \\
\QIbar(S,\partial S) & := 
    {\QHbar(S,\partial S) }/ \Mod(S).
\end{align*}

It follows from the analogous fact for $\H(S,\partial S)$ that the identity maps
\[ \QIbar (S,\partial S) \to
    \GIbar (S,\partial S) \to
    \AIbar (S,\partial S)\]
are continuous, but the inverse maps are not (see \cite{Th3}).

The space $\QIbar (S,\partial S)$ is locally nice and globally terrible.  If $S$ is not a thrice-punctured sphere, it is a disjoint union of an uncountable collection of noncompact orbifolds of various dimensions and an uncountable number of isolated points.  (This follows from Sullivan's extension of the Quasiconformal Parametrization Theorem, also known as Sullivan rigidity \cite{Su}.)
In particular, $\QIbar(S,\partial S)$ is Hausdorff and noncompact.

\begin{prop}
The space $\GIbar(S,\partial S)$ is not sequentially compact.
\end{prop}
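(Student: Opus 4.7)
I would exhibit a sequence in $\GI(S,\partial S)$ whose underlying hyperbolic $3$-manifolds exhibit the classical accidental parabolic phenomenon, producing a rank-$2$ cusp in every geometric limit. Every component of an element of $\Hbar(S,\partial S)$ is a Kleinian surface group, and hence has only cyclic parabolic subgroups, so no augmented element can contain a rank-$2$ parabolic subgroup. Because remarking by $\Mod(S)$ alters the marking but not the underlying hyperbolic $3$-manifold, this obstruction persists under any choice of subsequence and remarking, yielding the desired failure of sequential compactness.

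Concretely, fix $(X,Y)\in \mathcal{T}(S)\times\mathcal{T}(\overline{S})$ and a Dehn twist $D$ about a non-peripheral essential simple closed curve $b\subset S$, and consider the sequence $\rho_n = Q(X,D^n(Y))$, regarded as $\{[\rho_n]\}\subset \GI(S,\partial S)\subset \GIbar(S,\partial S)$. The sequence is non-constant, as varying the bottom conformal structure yields non-isometric quasifuchsian manifolds. Suppose for contradiction that a subsequence converges in $\GIbar(S,\partial S)$: then, after extraction and the choice of mapping classes $\phi_k\in \Mod(S)$, the sequence $\sigma_k = \rho_{n_k}\circ(\phi_k)_*^{-1}$ converges strongly in $\GHbar(S,\partial S)$ to an augmented element $(\{\rho_R\}_{R\in c(a)},a)$. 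Since $N_{\sigma_k}=N_{\rho_{n_k}}$ (remarking preserves the manifold), \cite{KT} implies that the complex length of the geodesic representative of $b$ in $N_{\rho_{n_k}}$ has real part tending to zero, and the classical accidental parabolic argument (cf.\ \cite{Th3}) shows that any pointed geometric limit $N_\infty^{\mathrm{geom}}$ of the $N_{\rho_{n_k}}$ contains a rank-$2$ parabolic subgroup at the class of $b$.

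The heart of the argument compares the augmented limit with the geometric limit. Strong convergence supplies continuous maps $h_k\colon \coprod_{R\in c(a)}N_{\rho_R}\to N_{\rho_{n_k}}$, compatible with the markings, which $C^\infty$-converge to an isometry on every compact subset of the domain. Passing to the pointed geometric limit of the targets, one obtains a limit map $h_\infty\colon \coprod_R N_{\rho_R}\to N_\infty^{\mathrm{geom}}$ whose restriction to each compact subset is an isometric embedding. Marking compatibility determines $h_\infty$ up to homotopy as the map realizing, on each component, the subgroup inclusion $\rho_R(\pi_1(R))\hookrightarrow \pi_1(N_\infty^{\mathrm{geom}})$; by the rigidity of locally isometric maps between complete hyperbolic $3$-manifolds, $h_\infty$ must agree (up to post-composition by an isometry of the target) with the Riemannian covering map associated to this inclusion. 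However, $\rho_R(\pi_1(R))$ is a torsion-free surface group and contains no $\mathbf{Z}^2$ subgroup, whereas $\pi_1(N_\infty^{\mathrm{geom}})$ does, so $[\pi_1(N_\infty^{\mathrm{geom}}):\rho_R(\pi_1(R))]=\infty$ (a finite-index subgroup would inherit a $\mathbf{Z}^2$). Thus this covering is infinite-sheeted; choosing a compact subset $K\subset N_{\rho_R}$ that contains two distinct points identified by the covering contradicts the requirement that $h_\infty|_K$ be an isometric embedding.

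The main obstacle is the rigidity step: forcing the limit map $h_\infty$ to agree with the natural Riemannian covering. This uses the aspherical nature of hyperbolic $3$-manifolds --- which promotes $\pi_1$-level agreement to homotopy of maps --- together with the fact that any local isometry between complete hyperbolic $3$-manifolds lifts to an isometry of $\mathbf{H}^3$, and is therefore the Riemannian covering up to post-composition by an isometry of the target. A secondary step is to invoke the accidental parabolic phenomenon; this applies uniformly because the manifolds $N_{\rho_{n_k}}$ are intrinsic (independent of marking), so the appearance of a rank-$2$ cusp in any geometric limit is insensitive to the $\Mod(S)$-action.
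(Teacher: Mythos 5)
Your proposal is correct and follows essentially the same route as the paper: both use the Dehn-twist sequence $Q(X,D^n(Y))$ from Proposition \ref{prop:!T2}, both invoke \cite{KT} to identify the (marking-independent) geometric limit as a manifold with a rank-two cusp, and both derive the contradiction from the fact that an augmented strong limit is a disjoint union of surface-group quotients, which can contain no ${\bf Z}^2$ subgroup. You simply spell out in more detail the covering-space comparison between the strong limit and the geometric limit, which the paper leaves implicit.
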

\begin{proof}
Consider the sequence $\{\rho_n\}$ from the proof of Proposition \ref{prop:!T2}.
It determines a sequence of hyperbolic manifolds $N_n$ whose geometric
limit $X$ is homeomorphic to $S \times (0,1)$ minus $b \times \{1/2\}$, where
$b$ is a simple closed curve of $S$ \cite{KT}.  
The manifold $X$ is not homotopy equivalent to $S$.  No matter how the
sequence $N_n$ is marked, this geometric limit will not change.  This implies
that no subsequence converges in $\GIbar (S,\partial S)$.
\end{proof}
Finally, the examples in Proposition \ref{T_1-separated} also converge strongly, see
Theorem 3.12 in \cite{Mc3}, so we see that there are points in
$\GIbar(S,\partial S)$ that are not closed.

\bigskip

One may define a refinement of the algebraic topology on 
$\Hbar(S,\partial S)$, which is still coarser than the strong topology,
so that geometrically finite points are closed in the resulting quotient
topology on $\Ibar (S,\partial S)$, yet the resulting
quotient topology on the augmented moduli space is still sequentially compact. 
We say that a stable sequence 
$\{(\{\rho^n_R\}_{R\in c(a_{\text{stable}})},a_{\text{stable}})\}$
converges maximally algebraically to $(\{\rho_F\}_{F\in c(a)},a)$ if
it converges algebraically and there does not exist a subsequence 
$\{(\{\rho^j_R\}_{R\in c(a_{\text{stable}})},a_{\text{stable}})\}$
and a sequence  $\{ \phi_j\}$ in $\Mod(S)$, each of which is
a product of Dehn twists about elements of $a-a_{\text{stable}}$,
such that $\{\phi_j(\{\rho^j_R\}_{R\in c(a_{\text{stable}})},a_{\text{stable}})\}$
converges algebraically to $(\{\rho_B\}_{F\in c(b)},b)$ where $b$ is a proper
subset of $a$. We denote $\Hbar(S,\partial S)$ with the topology
of maximally algebraic convergence by $\operatorname{B\overline{H}}(S,\partial S)$.
Its quotient by the action of $\Mod(S)$ is denoted by
$\operatorname{B\overline{\mathcal{I} } }(S,\partial S)$. (Note that this topology
is designed specifically to disallow examples like those described in
the remark terminating Section \ref{sec:ms}.)  The proof of Theorem \ref{compactness}
can be easily modified 
to verify the sequential compactness of $\operatorname{B\overline{\mathcal{I} } }(S,\partial S)$.

\bibliography{bibliography_revise}

\begin{thebibliography}{10}

\bibitem{BP}
Riccardo Benedetti and Carlo Petronio.
\newblock {\em Lectures on hyperbolic geometry}.
\newblock Universitext. Springer-Verlag, Berlin, 1992.

\bibitem{Ber7}
Lipman Bers.
\newblock Simultaneous uniformization.
\newblock {\em Bull. Amer. Math. Soc.}, 66:94--97, 1960.

\bibitem{Ber1}
Lipman Bers.
\newblock On boundaries of {T}eichm\"uller spaces and on {K}leinian groups.
  {I}.
\newblock {\em Ann. of Math. (2)}, 91:570--600, 1970.

\bibitem{BoTame}
Francis Bonahon.
\newblock Bouts des vari\'et\'es hyperboliques de dimension {$3$}.
\newblock {\em Ann. of Math. (2)}, 124(1):71--158, 1986.

\bibitem{Bow2}
Brian~H. Bowditch.
\newblock Markoff triples and quasi-{F}uchsian groups.
\newblock {\em Proc. London Math. Soc. (3)}, 77(3):697--736, 1998.

\bibitem{Brock-Bromberg}
Jeffrey~F. Brock and Kenneth~W. Bromberg.
\newblock On the density of geometrically finite {K}leinian groups.
\newblock {\em Acta Math.}, 192(1):33--93, 2004.

\bibitem{BCM}
Jeffrey~F. Brock, Richard~D. Canary, and Yair~N. Minsky.
\newblock The classification of {K}leinian surface groups {II}: {T}he {E}nding
  {L}amination {C}onjecture.
\newblock \ Available at \href{http://arxiv.org/abs/math.GT/0412006}{\tt
  http://arxiv.org/abs/math.GT/0412006}.

\bibitem{BrombergPT}
Kenneth~W. Bromberg.
\newblock The space of {K}leinian punctured torus groups is not locally
  connected.
\newblock Available at \href{http://front.math.ucdavis.edu/0901.4306} {\tt
  http://front.math.ucdavis.edu/0901.4306}.

\bibitem{Bromberg}
Kenneth~W. Bromberg.
\newblock Projective structures with degenerate holonomy and the {B}ers density
  conjecture.
\newblock {\em Ann. of Math. (2)}, 166(1):77--93, 2007.

\bibitem{CMT}
Richard~D. Canary, Yair~N. Minsky, and Edward~C. Taylor.
\newblock Spectral theory, {H}ausdorff dimension and the topology of hyperbolic
  3-manifolds.
\newblock {\em J. Geom. Anal.}, 9(1):17--40, 1999.

\bibitem{Cantat}
Serge Cantat.
\newblock Bers and {H}\'enon, {P}ainlev\'e and {S}chr\"odinger.
\newblock {\em Duke Math. J.}, 149:411--460, 2009.

\bibitem{chuckrow}
Vicki Chuckrow.
\newblock On {S}chottky groups with applications to kleinian groups.
\newblock {\em Ann. of Math. (2)}, 88:47--61, 1968.

\bibitem{HP}
Michael Heusener and Joan Porti.
\newblock The variety of characters in {${\rm PSL}\sb 2(\mathbb{C})$}.
\newblock {\em Bol. Soc. Mat. Mexicana (3)}, 10(Special Issue):221--237, 2004.

\bibitem{jorgensen}
Troels J{\o}rgensen.
\newblock On discrete groups of {M}\"obius transformations.
\newblock {\em Amer. J. Math.}, 98(3):739--749, 1976.

\bibitem{K}
Michael Kapovich.
\newblock {\em Hyperbolic manifolds and discrete groups}, volume 183 of {\em
  Progr. Math.}
\newblock Birkh\"auser, Boston, MA, 2001.

\bibitem{KT}
Steven~P. Kerckhoff and William~P. Thurston.
\newblock Noncontinuity of the action of the modular group at {B}ers' boundary
  of {T}eichm\"uller space.
\newblock {\em Invent. Math.}, 100(1):25--47, 1990.

\bibitem{lee}
Michelle Lee.
\newblock Dynamics on the {$\PSLC$}-character variety of a twisted
  {$I$}-bundle.
\newblock in preparation.

\bibitem{Magid}
Aaron Magid.
\newblock Deformation spaces of {K}leinian surface groups are not locally
  connected.
\newblock Available at \href{http://arxiv.org/abs/1003.4541}{\tt
  http://arxiv.org/abs/1003.4541}.

\bibitem{Marden}
Albert Marden.
\newblock The geometry of finitely generated {K}leinian groups.
\newblock {\em Ann. of Math. (2)}, 99:383--462, 1974.

\bibitem{Mc3}
Curtis~T. McMullen.
\newblock {\em Renormalization and 3-manifolds which fiber over the circle},
  volume 142 of {\em Annals of Math. Studies}.
\newblock Princeton Univ. Press, Princeton, NJ, 1996.

\bibitem{St6}
Juan Souto and Peter Storm.
\newblock Dynamics of the mapping class group action on the variety of {${\rm
  PSL}\sb 2\mathbb C$} characters.
\newblock {\em Geom. Topol.}, 10:715--736, 2006.

\bibitem{Su}
Dennis Sullivan.
\newblock On the ergodic theory at infinity of an arbitrary discrete group of
  hyperbolic motions.
\newblock In {\em Riemann surfaces and related topics: {P}roceedings of the
  1978 {S}tony {B}rook {C}onference ({S}tate {U}niv. {N}ew {Y}ork, {S}tony
  {B}rook, {N}.{Y}., 1978)}, volume~97 of {\em Ann. of Math. Stud.}, pages
  465--496. Princeton Univ. Press, Princeton, N.J., 1981.

\bibitem{SullivanII}
Dennis Sullivan.
\newblock Quasiconformal homeomorphisms and dynamics. {II}. {S}tructural
  stability implies hyperbolicity for {K}leinian groups.
\newblock {\em Acta Math.}, 155(3-4):243--260, 1985.

\bibitem{TWZ}
Ser~Peow Tan, Yan~Loi Wong, and Ying Zhang.
\newblock Generalized {M}arkoff maps and {M}c{S}hane's identity.
\newblock {\em Adv. Math.}, 217(2):761--813, 2008.

\bibitem{Th4}
William~P. Thurston.
\newblock Hyperbolic structures on 3-manifolds, {II}: {S}urface groups and
  3-manifolds which fiber over the circle.
\newblock Available at \href{http://arxiv.org/abs/math.GT/9801045}{\tt
  http://arxiv.org/abs/math.GT/9801045}.

\bibitem{Th3}
William~P. Thurston.
\newblock Hyperbolic structures on {$3$}-manifolds. {I}. {D}eformation of
  acylindrical manifolds.
\newblock {\em Ann. of Math. (2)}, 124(2):203--246, 1986.

\bibitem{Wolpert}
Scott~A. Wolpert.
\newblock Geometry of the {W}eil-{P}etersson completion of {T}eichm\"uller
  space.
\newblock In {\em Surveys in differential geometry, {V}ol.\ {VIII} ({B}oston,
  {MA}, 2002)}, Surv. Differ. Geom., VIII, pages 357--393. Int. Press,
  Somerville, MA, 2003.

\end{thebibliography}
\end{document}